\documentclass[11pt,a4paper,leqno]{amsart}

\usepackage[latin1]{inputenc}
\usepackage[T1]{fontenc}
\usepackage{amsfonts}
\usepackage{amsmath}
\usepackage{amssymb}
\usepackage{eurosym}
\usepackage{mathrsfs}
\usepackage{palatino}
\usepackage{color}
\usepackage{xcolor}
\usepackage{esint}
\usepackage{url}
\usepackage{hyperref}

\newcommand{\R}{\mathbb{R}}

\newcommand{\C}{\mathbb{C}}

\newcommand{\N}{\mathbb{N}}
\newcommand{\No}{\mathbb{N}_0}
\newcommand{\Z}{\mathbb{Z}}

\numberwithin{equation}{section}

\newcommand{\ud}[0]{\,\mathrm{d}}

\newcommand{\dist}[0]{\operatorname{dist}}

\newcommand{\abs}[1]{|#1|}
\newcommand{\babs}[1]{\big|#1\big|}
\newcommand{\Babs}[1]{\Big|#1\Big|}
\newcommand{\norm}[2]{\|#1\|_{#2}}

\newcommand{\ave}[1]{\langle #1\rangle}
\newcommand{\bave}[1]{\big\langle #1\big\rangle}

\newcommand{\BMO}[0]{\operatorname{BMO}}
\newcommand{\supp}[0]{\operatorname{supp}}

\newcommand{\prob}[0]{\mathbb{P}}
\newcommand{\Exp}[0]{\mathbb{E}}

\theoremstyle{plain}
\newtheorem{thm}{Theorem}[section]
\newtheorem{lem}[thm]{Lemma}

\theoremstyle{definition}
\newtheorem{defn}[thm]{Definition}

\theoremstyle{remark}
\newtheorem{rem}[thm]{Remark}

\title{Compactness of bilinear singular integral with mild kernel regularity}

\author{Jinsong Li}

\address{Center for Applied Mathematics, Tianjin University, Weijin Road 92, 300072 Tianjin, China}
\email{ljs@tju.edu.cn}

\makeatletter
\@namedef{subjclassname@2020}{%
  \textup{2020} Mathematics Subject Classification}
\makeatother

\subjclass[2020]{42B20, 42B25}
\keywords{}

\begin{document}

\allowdisplaybreaks

\begin{abstract}
This paper extends the characterization of compactness established in \cite{cao2024} to bilinear singular integral operators with mild kernel regularity. The exponent we obtain coincides with the best known sufficient condition for the classical bilinear $T1$ theorem. A novel weak compactness property condition is also introduced.
\end{abstract}

\maketitle
\section{Introduction and main theorem}
Over the past several decades, the singular integral theory has been significantly developed, becoming an indispensable tool in many other fields, including real and complex analysis, partial differential equations and geometric measure theory. Usually a singular integral operator can be represented as
\[
T(f)(x)=\int K(x,y)f(y)\ud y, \,\text{ whenever }x\notin \supp f,
\]
where the kernel $K$ satisfies
\[
\abs{K(x,y)-K(x',y)}+\abs{K(y,x)-K(y,x')}\lesssim\frac{\abs{x-x'}^\alpha}{\abs{x-y}^{n+\alpha}},\text{ whenever }\abs{x-x'}\leq\frac{\abs{x-y}}{2},
\]
 where $0<\alpha\leq1$. In \cite{CZ1952}, Calder\'on and Zygmund established the classical Calder\'on-Zygmund principle, reducing the general $L^p$ $(1<p<\infty)$ boundedness of this class of operators to the case $p=2$. Subsequently, David and Journ\'e \cite{DJ1984} provided a necessary and sufficient condition for the $L^2$ boundedness of such operators--the $T1$ theorem. Later, Hyt\"onen \cite{H2012} provided a modern proof of $T1$ theorem, in fact, it is an enhanced version--the so-called dyadic representation theorem. 
For more recent advances, see \cite{DJ1984,KS1999,GT2002,LMO2019,LMV2021}.

The operators above can fit into the following family as the special case $\omega(t)=t^\alpha$. To go further, we now consider a larger class of operators, namely those whose kernels satisfy the following Dini-type conditions:
\[
\abs{K(x,y)-K(x',y)}+\abs{K(y,x)-K(y,x')}\lesssim\omega\left(\frac{\abs{x-x'}}{\abs{x-y}}\right)\frac{1}{\abs{x-y}^n},
\]
whenever $\abs{x-x'}\leq\abs{x-y}/2$,
where $\omega$ is a modulus of continuity, satisfies
\[
\int_{0}^1\omega(t)\Big(1+\log\frac{1}{t}\Big)^\beta\frac{\ud t}{t}<\infty\,\text{ for some }\beta\geq0.
\]
 It was proved that the classical Calder\'on-Zygmund principle remains valid with $\beta=0$, see e.g.\cite{Stein1993}. However, the corresponding $T1$ theorem for singular integrals with such kernel is quite challenging. Currently the best known result in this direction  is $\beta=1/2$, which appears implicit in Figiel \cite{F1990} and was explicitly formulated by Deng, Yan and Yang \cite{DYY1998}. Later, this result was extended by Grau de la Herr\'an and Hyt\"onen \cite{GH2018} to the non-homogeneous scenario, and by Airta, Martikainen and Vuorinen \cite{AMV2022} to the multilinear and multiparameter settings while retaining $\beta=1/2$.

Compactness is also an important topic in harmonic analysis. The systematic study of compactness for commutators of singular integral operators was initiated by Uchiyama \cite{Uchi1978}, who proved that for a Calder\'on-Zygmund operator $T$, the commutator $[b,T]$ is compact on $L^p$ ($1<p<\infty$) if and only if the symbol $b\in\mathrm{CMO}$. This result was later extended to the bilinear setting in \cite{BT2013} and to the weighted case in \cite{BDMT2015}. An off-diagonal analogue was subsequently obtained in \cite{HLTY2023} and the bi-commutator $[T_1,[b,T_2]]$ was treated in \cite{MT2024}. 

Regarding the compactness of singular integral operators, Villarroya \cite{V2015} first characterized compactness of compact Calder\'on-Zygmund operators, a result that was subsequently extended to the endpoint case in \cite{OV2017}. Recently, Cao, Liu, Si, and Yabuta \cite{cao2024} established a bilinear $T1$ theorem for compactness, thereby providing a characterization of compactness for compact bilinear Calder\'on-Zygmund operators. This motivates us to extend their characterization to a larger class of operators with Dini-type kernels. In contrast to the approach in \cite{cao2024}, the representation theorem we prove here employs modified compact dyadic shifts, thereby avoiding certain technical calculations. We will prove that $\beta=1/2$ is sufficient for such operators, which means that our extension is optimal within the current theoretical framework. In addition, we provide a single, unified characterization of compactness. These results are summarized in the following theorem.
\begin{thm}\label{main thm}
    Let $T$ be a bilinear operator associated with a compact bilinear Calder\'on-Zygmund kernel with $\log\frac{1}{2}$-Dini condition, then the following assertions are equivalent:
    \begin{enumerate}
    \item $T$ is compact from $L^{p_1}(\R^n)\times L^{p_2}(\R^n)$ to $L^p(\R^n)$ for all $1<p_1,p_2\leq\infty$ with $1/p=1/p_1+1/p_2>0$;
    \item $T$ satisfies the WCP and 
      \[
    S(1,1)\in\mathrm{CMO}\,\text{ for all }S\in\{T,T^{*1},T^{*2}\};
    \]
    \item $T$ satisfies the WCP$^*$.
    \end{enumerate}
\end{thm}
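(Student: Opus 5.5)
We will prove the cycle of implications (1) $\Rightarrow$ (3) $\Rightarrow$ (2) $\Rightarrow$ (1). The two short links rest on the definitions of the weak compactness properties. For (1) $\Rightarrow$ (3), we test $T$ on normalized bumps. If $\phi_I,\psi_I$ are $L^\infty$-normalized bumps adapted to a cube $I$, then $|I|^{-1}\langle T(\phi_I,\psi_I),\chi_I\rangle$ is a pairing of $T$ against $L^{p_1}\times L^{p_2}\times L^{p'}$-normalized functions. These functions converge weakly to zero as $\ell(I)\to 0$, as $\ell(I)\to\infty$, or as $I$ escapes to infinity. Compactness of $T$ turns weak convergence of the inputs into norm convergence of $T(\phi_I,\psi_I)$, which gives the uniform decay demanded by WCP$^*$. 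For (3) $\Rightarrow$ (2), WCP is the special case of WCP$^*$ with $b=1$. To see $S(1,1)\in\mathrm{CMO}$, we estimate the mean oscillation of $T(1,1)$ on a cube $I$. We split $1=\chi_{3I}+\chi_{(3I)^c}$ in each slot. The local part is controlled directly by WCP$^*$. The nonlocal parts are handled by the compact kernel decay $F_K$ together with the Dini integral, and these give $o(1)$ oscillation as $\ell(I)\to0$, as $\ell(I)\to\infty$, or as $\dist(I,0)\to\infty$. The adjoints are treated by symmetry.

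The main implication is (2) $\Rightarrow$ (1). First we prove compactness on $L^{p_1}\times L^{p_2}\to L^{p}$ for one triple, say $p_1=p_2=4$, $p=2$. We then extrapolate to all exponents through a compact version of the bilinear Calderón--Zygmund principle. Since the kernel is only $\log^{1/2}$-Dini, this version must be proved directly: bounded-plus-compact-kernel operators admit a compact interpolation, for instance by Krasnosel'skii-type interpolation of compactness together with the bilinear weak-$(1,1)$ estimate under Dini ($\beta=0$) kernels. For the core step we use a dyadic representation. Over random dyadic grids $\mathcal D_\omega$ we write
\[
\langle T(f_1,f_2),f_3\rangle=\Exp_\omega\sum_{k\ge0}\omega(2^{-k})\Big(\sum_{i}\langle U^{k}_{\omega,i}(f_1,f_2),f_3\rangle\Big)+\Exp_\omega\sum_{S}\langle \Pi_{S(1,1)}^\omega(f_1,f_2),f_3\rangle .
\]
Here the $U^k$ are \emph{modified compact dyadic shifts} of complexity $(k,k,k)$ whose coefficients carry a factor $F(I)$. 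This factor tends to $0$ as $I$ leaves a large compact family of cubes, and it is built from the kernel's compact decay and from the WCP. The $\Pi$ are bilinear paraproducts with $\mathrm{CMO}$ symbols.

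The steps are then as follows. (a) Decompose via martingale differences and good/bad cubes, and separate the separated, nested, and adjacent pieces. (b) Bound the separated and nested coefficients using the Dini modulus. The adjacent (diagonal) pieces use the WCP, and each piece gains the factor $F$. (c) Show that each paraproduct with a $\mathrm{CMO}$ symbol is compact. We do this by truncating the symbol's Haar expansion to finitely many cubes, which gives a finite-rank operator, and controlling the tail by the $\BMO$ norm of the remainder, which is small. (d) Show that each compact shift of complexity $k$ is a norm limit of finite-rank truncations, with operator norm $\lesssim \sqrt{k+1}$ uniformly, and with tail norm $\lesssim\sqrt{k+1}\,\sup_{I\notin\mathcal F}F(I)$.

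We expect step (d) to be the main obstacle, together with summing in $k$. Compactness is preserved under norm limits, so it suffices to show that the series $\sum_k\omega(2^{-k})\|U^k-U^k_{\mathcal F}\|$ converges and tends to $0$ uniformly in $\omega$. The complexity-dependent bound $\sqrt{k}$ for shifts is what makes $\sum_k \omega(2^{-k})\sqrt{k}<\infty$ equivalent to the $\log^{1/2}$-Dini condition. We will obtain it by following Figiel/Airta--Martikainen--Vuorinen: we use square function and martingale (UMD-type) estimates instead of the crude $k$ bound. We then insert the compactness factor coefficientwise and apply dominated convergence in $k$ to exchange the limit with the sum.
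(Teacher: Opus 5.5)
Your proposal has two genuine gaps: the argument for (1)$\Rightarrow$(3), and the passage to all exponents in (2)$\Rightarrow$(1). Your core step at a single exponent is a workable alternative to the paper's.

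Your argument for (1)$\Rightarrow$(3) rests on the principle that a compact operator maps weakly null inputs to norm null outputs. That is a linear fact, and it fails for bilinear maps. For example, $B(f,g)=\big(\sum_k f_kg_k\big)e$ on $\ell^2\times\ell^2$ is bounded with one-dimensional range, hence compact. Yet $B(e_j,e_j)=e$ for all $j$, although $e_j\rightharpoonup 0$. So the weak nullity of $f_Q:=\abs{Q}^{-1/4}1_Q$ in $L^4$ proves nothing by itself. The step can be repaired, but it needs an extra argument:
\begin{itemize}
\item Take cubes $Q_j$ escaping in one of the three senses. By compactness, extract an $L^2$-convergent subsequence of $T(f_{Q_j},f_{Q_j})$.
\item Show the limit is $0$ by pairing with $\phi\in C_c^\infty$. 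Use boundedness $L^{2r}\times L^{2r}\to L^r$ with $1<r<2$ if $\ell(Q_j)\to0$, or with $2<r<\infty$ if $\ell(Q_j)\to\infty$. Either way $\abs{\ave{T(f_{Q_j},f_{Q_j}),\phi}}\lesssim\abs{Q_j}^{1/r-1/2}\to0$.
\item If $\ell(Q_j)$ stays bounded above and below while $\abs{c_{Q_j}}\to\infty$, use the kernel size bound instead.
\end{itemize}
The paper argues differently. It applies Lemma~\ref{projection}, i.e.\ $\norm{(P_N^\sigma)^\bot T}{L^4\times L^4\to L^2}\to0$, on a dyadic cube $\tilde Q\supset Q$ of comparable size. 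It then controls the $P_N^\sigma$ part via the $L^\infty\times L^\infty\to\BMO$ bound and explicit Haar coefficient estimates.

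The second gap is in (2)$\Rightarrow$(1), where you pass from the single triple $(4,4,2)$ to the full range. Interpolation of compactness (Krasnosel'skii or Cwikel type, or their bilinear versions) only gives compactness strictly between a point of compactness and a point of boundedness. Since $1/p_1\ge0$ at both ends, a point with $p_1=\infty$ lies on such a segment only if both ends already have $p_1=\infty$. So from $(4,4)$ you never reach $L^\infty\times L^{p_2}\to L^{p_2}$ or $L^{p_1}\times L^\infty\to L^{p_1}$, and these are part of (1). The range $p<1$ would also need a bilinear compact-interpolation theorem with quasi-Banach targets, which you only assert. The paper does not interpolate. It obtains weighted boundedness of $T$ for all $A_{\vec q}$ weights via sparse domination, then applies Theorem~\ref{extrapolation of compactness}. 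That theorem needs compactness at one unweighted exponent and returns every $r_1,r_2\in(1,\infty]$, endpoints included. Substituting it for your interpolation step closes this gap.

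Your treatment of the model operators at $(4,4,2)$ differs from the paper's. The paper writes each $Q_{k,\sigma}$ as $4k$ standard compact shifts, which forces it to check that $L\mapsto F(L^{(k-i)})$ lies in $\mathcal F_0$. You truncate to top cubes $K\in\mathcal D_N(\sigma)$ and bound the tail by Lemma~\ref{boundedness of shifts}, applied with coefficients $1_{K\notin\mathcal D_N(\sigma)}F(K)a_{I_m,K}$. Since the $F$ of Lemma~\ref{coei} is independent of $k$, the tail is $\lesssim\sqrt{k+1}\,\varepsilon_N$ with $\varepsilon_N\to0$ uniformly in $\sigma$. You still need a line on why $\Exp_\sigma$ of the finite-rank truncations is compact. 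The reason is that the truncations send the unit balls into one compact subset of $L^2$ independent of $\sigma$, so the average lands in its closed convex hull.

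Finally, treating the adjoints ``by symmetry'' in (3)$\Rightarrow$(2) is not automatic. Condition (3) only tests $T$, and WCP$^*$ for $T$ gives no control of $T^{*1}(1_{3Q},1_{3Q})$ on $Q$. The paper is equally terse here; the argument actually needs WCP$^*$ for $T^{*1}$ and $T^{*2}$ as well.
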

Here WCP means \textit{Weak Compactness Property}, which will be introduced in definition \ref{WCP} and \ref{WCP*}.
\subsection*{Notation}
Throughout this paper, we write $A\lesssim B$ if there exists an absolute constant $c$ such that $A\leq cB$, and $A\lesssim_k B$ denotes that the implicit constant may depend on the parameter $k$. Let $\N_0=\N\cup\{0\}$. For a vector $x\in\R^n$, we will use the norm $\abs{x}=\max\{\abs{x_j}:1\leq j\leq n\}.$ 
\section{Preliminary}
\subsection{Compact bilinear Calder\'on-Zygmund operators}
We begin with following definitions, based on the setup of \cite{cao2024}, which is originally from \cite{V2015,OV2017}:
\begin{defn}
    Let $\mathcal{F}$ consist of all triples $(F_1,F_2,F_3)$ of bounded functions, where $F_i:[0,\infty)\to [0,\infty)$ satisfy
    \begin{equation*}
        \lim_{t\to 0}F_1(t)= \lim_{t\to \infty}F_2(t)= \lim_{t\to \infty}F_3(t)=0.
    \end{equation*}
    Let $\mathcal{F}_0$ be the collection of all bounded functions $F:\mathcal{Q}\to[0,\infty)$ satisfying
    \begin{equation*}
        \lim_{\ell(Q)\to 0}F(Q)= \lim_{\ell(Q)\to \infty}F(Q)= \lim_{\abs{c_Q}\to \infty}F(Q)=0,
    \end{equation*}
where $\mathcal{Q}$ denotes the collection of all cubes in $\R^n$ with sides parallel to the axes and $c_Q$ denotes the center of $Q$.
\end{defn}
\begin{defn}
A function $K(x,y,z):\R^{3n}\backslash \{x=y=z\}\to \C$ is a \textit{compact bilinear Calder\'on-Zygmund kernel with $\log\frac{1}{2}$-Dini condition} if it satisfies
\begin{equation}
    \abs{K(x,y,z)}\lesssim\frac{F(x,y,z)}{(\abs{x-y}+\abs{x-z})^{2n}},
\end{equation}
\begin{equation}
    \abs{K(x,y,z)-K(x',y,z)}\lesssim\omega\left(\frac{\abs{x-x'}}{\abs{x-y}+\abs{x-z}}\right)\frac{F(x,y,z)}{(\abs{x-y}+\abs{x-z})^{2n}},
\end{equation}
whenever $\abs{x-x'}\leq\max\{\abs{x-y},\abs{x-z}\}/2$, where 
\begin{equation*}
    F(x,y,z):=F_1(\abs{x-y}+\abs{x-z})F_2(\abs{x-y}+\abs{x-z})F_3(\abs{x+y}+\abs{x+z}),
\end{equation*}
for $(F_1,F_2,F_3)\in\mathcal{F}$ and $\omega$ is a modulus of continuity: an increasing, subadditive function $\omega$ with $\omega(0)=0$ and satisfies
\begin{equation*}
    \int_{0}^1\omega(t)\left(1+\log\frac{1}{t}\right)^{1/2}\frac{\ud t}{t}<\infty,
\end{equation*}
together with other two symmetrical regularity conditions.
\end{defn}
\begin{defn}
    Let $T$ be a bilinear operator defined initially on finite linear combinations of characteristic functions of cubes in $\R^n$, we say that $T$ is \textit{associated with a compact bilinear Calder\'on-Zygmund kernel with $\log\frac{1}{2}$-Dini condition} if there exists a compact bilinear Calder\'on-Zygmund kernel with $\log\frac{1}{2}$-Dini condition $K$ such that 
    \[
    \ave{T(f_1,f_2),f_3}=\int K(x,y,z)f_1(y)f_2(z)f_3(x)\ud y\ud z\ud x
    \]
    for $\supp(f_i)\cap\supp(f_j)=\emptyset$ with some $i\neq j$. And the adjoints $T^{*1},T^{*2}$ are defined via
    \[
    \ave{T(f_1,f_2),f_3}=\ave{T^{*1}(f_3,f_2),f_1}=\ave{T^{*2}(f_1,f_3),f_2}.
    \]
\end{defn}
\begin{defn}\label{WCP}
    We say that operator $T$ satisfies the \textit{weak compactness property} (abbreviated as WCP ) if there exists $F\in\mathcal{F}_0$,
    \begin{equation*}
        \abs{\ave{T(1_Q,1_Q),1_Q}}\lesssim F(Q)\abs{Q}\text{ for all cubes }Q\in\mathcal{Q}.
    \end{equation*}
\end{defn}
\begin{defn}\label{WCP*}
    We say that operator $T$ satisfies the \textit{weak compactness property}$^*$ (abbreviated as WCP$^*$ ) if there exists $F\in\mathcal{F}_0$,
    \begin{equation*}
        \norm{T(1_Q,1_Q)1_Q}{L^2}\lesssim F(Q)\abs{Q}^{1/2}\text{ for all cubes }Q\in\mathcal{Q}.
    \end{equation*}
\end{defn}
\subsection{CMO and Weights}
\begin{defn}
    We say that a locally integrable function $f\in\BMO(\R^n)$, if 
    \begin{equation*}
        \norm{f}{\BMO}:=\sup_{Q\in\mathcal{Q}}\frac{1}{\abs{Q}}\int_Q\abs{f(x)-\ave{f}_Q}\ud x<\infty,
    \end{equation*}
     where $\ave{f}_I$ denotes $\frac{1}{\abs{I}}\int_If$. The space $\mathrm{CMO}(\R^n)$ is defined as the closure of $C_c^{\infty}(\R^n)$ in $\BMO(\R^n)$. For our purpose, it will be more convenient to use the following characterization.
\end{defn}
\begin{lem}\cite[Lemma 3]{Uchi1978}\label{CMO}
    Let $f\in\BMO(\R^n)$, then $f\in\mathrm{CMO}(\R^n)$ if and only if $f$ satisfies 
    \begin{equation*}
        \inf_a\frac{1}{\abs{Q}}\int_Q\abs{f(x)-a}\ud x\to 0,
    \end{equation*}
    as $\ell(Q)\to 0$ or $\ell(Q)\to \infty$ or $\abs{c_Q}\to\infty$.
\end{lem}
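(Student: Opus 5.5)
The statement just above is Lemma~\ref{CMO}, Uchiyama's characterization of $\mathrm{CMO}$. Throughout, write $O(f,Q):=\inf_a\frac{1}{\abs{Q}}\int_Q\abs{f-a}$. Note that $O(f,Q)\le \frac{1}{\abs{Q}}\int_Q\abs{f-\ave{f}_Q}\le 2\,O(f,Q)$. Hence $\sup_Q O(f,Q)$ is comparable to $\norm{f}{\BMO}$, and $O(f,Q)\le O(g,Q)+O(f-g,Q)$.

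\emph{Necessity.} Let $f\in\mathrm{CMO}$ and fix $\varepsilon>0$. Choose $g\in C_c^\infty$ with $\norm{f-g}{\BMO}<\varepsilon$. Then $O(f,Q)\le O(g,Q)+2\varepsilon$, so it suffices to check the three limits for $g$. If $\ell(Q)\to0$, then $O(g,Q)\lesssim \ell(Q)\norm{\nabla g}{\infty}\to 0$. If $\ell(Q)\to\infty$, then $O(g,Q)\le \frac{2}{\abs{Q}}\int\abs{g}\to 0$. If $\abs{c_Q}\to\infty$ with $\ell(Q)$ bounded, then $Q$ eventually misses $\supp g$, so $O(g,Q)=0$. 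If $\ell(Q)$ is unbounded along the sequence, the second case applies. This direction is routine.

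\emph{Sufficiency.} Given $\varepsilon>0$, I will construct $g\in C_c^\infty$ with $\norm{f-g}{\BMO}\lesssim\varepsilon$, in two steps.

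The first step is truncation at infinity. By hypothesis, choose $\delta$, $N$, $R$ such that $O(f,Q)<\varepsilon$ whenever $\ell(Q)<\delta$, or $\ell(Q)>N$, or $Q\cap B(0,R)=\emptyset$. Take a very large $M\gg R+N$ and a cutoff $\phi$ with $\phi=1$ on $B(0,M)$, $\supp\phi\subset B(0,M^2)$, and $\abs{\nabla\phi}\lesssim (M\log M)^{-1}$. I would rather use the logarithmic cutoff $\phi(x)=\psi(\log\abs{x}/\log M)$. Set $h=(f-c)\phi+c$, where $c=\ave{f}_{B(0,M^2)}$. Up to a constant, this agrees with $f$ near the origin and is constant far away.

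The second step is mollification. Set $g=(h-c)*\rho_\eta$ with $\eta\ll\delta$. I then estimate $O(f-g,Q)$ by cases:
\begin{itemize}
\item For small cubes, with $\ell(Q)<\delta$, I use the oscillation of $f$ together with the fact that mollification is an average of translates. This gives $O(f*\rho_\eta,Q)\le\sup_{\abs{y}\le\eta}O(f,Q-y)$, and similarly for the cutoff part by the product rule.
\item For large cubes, with $\ell(Q)>N$, I use the comparable scale $\eta$ and the smallness of $O(f,\cdot)$ there.
\item For medium cubes, with $\delta\le\ell(Q)\le N$ and meeting $B(0,R)$, the mollification error $\norm{f-f*\rho_\eta}{L^1(Q)}$ is small by $L^1_{loc}$ continuity of translations, uniformly over a compact family.
\end{itemize}

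The main obstacle will be controlling the cutoff term $(f-c)(1-\phi)$ on cubes that straddle the transition annulus, or that are large but contain the origin region. This requires the key estimate $\frac{1}{\abs{Q}}\int_Q\abs{f-c}\abs{\nabla\phi}\ell(Q)\lesssim\varepsilon$. Here I would use the standard BMO growth bound $\abs{\ave{f}_{2^kQ}-\ave{f}_Q}\lesssim k\norm{f}{\BMO}$. With the logarithmic cutoff this produces a factor $\log\abs{x}/\log M$ times $(\log M)^{-1}$, which is small. Alternatively, the telescoping averages over the dyadic chain between $Q$ and $B(0,M^2)$ all involve cubes far from the origin or large. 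So the hypothesis makes each increment less than $\varepsilon$, and the logarithmic count is absorbed by the $1/\log M$ gradient. I would carry out this step following Uchiyama's original argument.
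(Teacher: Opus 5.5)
The paper does not prove this lemma; it quotes it from Uchiyama, so your argument has to stand on its own. Your necessity direction is complete. For sufficiency, the truncate-then-mollify scheme is viable, and your mollification step is fine once the cutoff term is under control. Indeed, $O(u,Q')\le O(f,Q')+O((f-c)(1-\phi),Q')$ together with $O(u*\rho_\eta,Q)\le 2\int\rho_\eta(y)\,O(u,Q-y)\ud y$ leaves only cubes with $\delta\le\ell(Q)\le N$ near the origin, and $L^1$-continuity handles those.

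However, the first estimate you propose for the cutoff term fails. Write $w=1-\phi$ and $F=f-c$. From $Fw-\ave{F}_Q\ave{w}_Q=(F-\ave{F}_Q)w+\ave{F}_Q(w-\ave{w}_Q)$ you get
\[
O(Fw,Q)\le 2\,O(f,Q)\,1_{\{w\not\equiv 0\text{ on }Q\}}+\abs{\ave{f}_Q-c}\;\mathrm{osc}(w,Q),\qquad \mathrm{osc}(w,Q):=\frac{1}{\abs{Q}}\int_Q\abs{w-\ave{w}_Q}.
\]
The first term is $<2\varepsilon$, because $w\not\equiv0$ on $Q$ forces $Q$ to be small, large, or disjoint from $B(0,R)$. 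In the second term, the growth bound $\abs{\ave{f}_Q-c}\lesssim\log(M^2/\ell(Q))\norm{f}{\BMO}$ only gives $O(\norm{f}{\BMO})$. For $\ell(Q)\sim\abs{c_Q}\sim M^{3/2}$, the logarithmic cutoff has $\mathrm{osc}(w,Q)\sim 1/\log M$, while the growth bound on $\abs{\ave{f}_Q-c}$ is of size $\log M\,\norm{f}{\BMO}$, so the product is not small.

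Smallness must come from the hypothesis, which is your "alternative" route. Consider the doubling chain from $Q$ up to a cube of side $\sim M^2$ containing $B(0,M^2)$. Every cube in it has side $<\delta$, or side $>N$, or side $\le N$ while lying within distance $N$ of $\{\abs{x}\ge M\}$; in the last case it misses $B(0,R)$ once $M>R+N$. So each increment is $\lesssim\varepsilon$, and $\abs{\ave{f}_Q-c}\lesssim\varepsilon\,(1+\abs{\log(M^2/\ell(Q))})$. The oscillation of $w$ satisfies:
\begin{itemize}
\item $\mathrm{osc}(w,Q)\lesssim\ell(Q)/(\abs{c_Q}\log M)$ for $\ell(Q)\le\abs{c_Q}/2$;
\item $\mathrm{osc}(w,Q)\lesssim1/\log M$ for $\abs{c_Q}/2<\ell(Q)\le M^2$;
\item $\mathrm{osc}(w,Q)\lesssim(M^2/\ell(Q))^n$ for $\ell(Q)>M^2$.
\end{itemize}
In every case the product with the chain bound is $\lesssim\varepsilon$.

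Two corrections to your write-up follow. First, the gradient-weighted quantity you call the key estimate is not what must be bounded, and it can be false. For $n=1$ and any $Q\supset B(0,M^2)$, $\frac{1}{\abs{Q}}\int_Q\abs{f-c}\abs{\nabla\phi}\,\ell(Q)=\int\abs{f-c}\abs{\phi'}$. For $f=\log\log(e+\abs{x})\in\mathrm{CMO}$ this stays of order one as $M\to\infty$, because $\ell(Q)\abs{\nabla\phi}$ grossly overestimates $\mathrm{osc}(w,Q)\le1$. Bound $\abs{\ave{f}_Q-c}\,\mathrm{osc}(w,Q)$ instead, as above. Second, once smallness comes from the $\varepsilon$-per-step telescoping, the logarithmic cutoff is unnecessary. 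A linear cutoff from $B(0,M)$ to $B(0,2M)$ with $c=\ave{f}_{B(0,2M)}$ already gives $\varepsilon\,\frac{\ell}{M}\big(1+\log\frac{M}{\ell}\big)\lesssim\varepsilon$ for $\ell=\ell(Q)\le M$. Larger cubes are handled exactly as above.
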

\begin{defn}
    Given $1<p<\infty$, we define the Muckenhoupt weight $A_p$ as the collection of all weights on $\R^n$ satisfying
\[
[w]_{A_{p}}:=\sup_{Q\in\mathcal{Q}}\ave{w}_Q\ave{w^{1-p'}}_Q^{p-1}<\infty,\quad A_{\infty}:=\cup_{p>1}A_{p}.
\]
\end{defn}
\begin{defn}
    Given $\vec{p}=(p_1,\dots,p_m)$ with $1\leq p_1,\dots,p_m\leq\infty$ and $1/p=\sum_{i=1}^m1/p_i$, we say $\vec{w}=(w_1,\dots,w_m) $ is a $m$-linear $A_{\vec{p}}$ weight in $\R^n$ if 
\begin{align*}
  [\vec{w}]_{A_{\vec{p}}}=\sup_{Q\in\mathcal{Q}}\ave{w}_{p,Q}\prod_{i=1}^{m}\ave{w_i^{-1}}_{p_i',Q}<\infty,\quad w=\prod_{i=1}^{m}w_i.
\end{align*} 
\end{defn}
The basic multilinear weighted theory can be found in e.g.\cite{LOPTTR2009}. The extrapolation of compactness was introduced by Hyt\"onen and Lappas in \cite{HL2023} and was extended to the multilinear setting in \cite{HL2022,COY2022,cao2024}. In the multilinear setting, the following extrapolation theorem for compactness allows one to reduce compactness in the full range to compactness and weighted boundedness in Banach range. 
\begin{thm}\cite[Theorem 1.9]{cao2024}\label{extrapolation of compactness}
    Assume that $T$ is a bilinear operator such that 
    \[
    \text{ T is compact from }L^{p_1}(u_1^{p_1})\times L^{p_2}(u_2^{p_2})\text{ to }L^p(u^p)
    \]
    for some $1/p=1/p_1+1/p_2>0$ with $p_1,p_2\in[1,\infty]$ and for some $(u_1,u_2)\in A_{(p_1,p_2)}$, where $u=u_1u_2$; and 
    \[
    \text{ T is bounded from }L^{q_1}(v_1^{q_1})\times L^{q_2}(v_2^{q_2})\text{ to }L^q(v^q)
    \]
    for some $1/q=1/q_1+1/q_2>0$ with $q_1,q_2\in[1,\infty]$ and for all $(v_1,v_2)\in A_{(q_1,q_2)}$, where $v=v_1v_2$. Then 
    \[
    \text{ T is compact from }L^{r_1}(w_1^{r_1})\times L^{r_2}(w_2^{r_2})\text{ to }L^r(w^r)
    \]
    for all $1/r=1/r_1+1/r_2>0$ with $r_1,r_2\in(1,\infty]$ and for all $(w_1,w_2)\in A_{(r_1,r_2)}$, where $w=w_1w_2$.
\end{thm}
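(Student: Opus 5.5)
The statement is \cite[Theorem 1.9]{cao2024}. The plan is the Hyt\"onen--Lappas scheme \cite{HL2023} in its bilinear form: first extend boundedness to all exponents and weights by extrapolation, then place the target space strictly between the given compact point and a suitable bounded point, and finally transfer compactness by a bilinear interpolation theorem for compact operators. Throughout, write $\mathcal{X}(\vec r,\vec w)$ for the triple $L^{r_1}(w_1^{r_1})\times L^{r_2}(w_2^{r_2})\to L^r(w^r)$.

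\textbf{Step 1: boundedness everywhere.} I will apply the multilinear Rubio de Francia extrapolation theorem in the $A_{\vec p}$ setting (Li--Martell--Ombrosi type). The hypothesis that $T$ is bounded on $\mathcal{X}(\vec q,\vec v)$ for all $\vec v\in A_{(q_1,q_2)}$ then yields boundedness on $\mathcal{X}(\vec s,\vec v)$ for every $s_1,s_2\in(1,\infty]$ and every $\vec v\in A_{(s_1,s_2)}$, with norm controlled by $[\vec v]_{A_{\vec s}}$.

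\textbf{Step 2: the interpolation set-up.} Fix a target $(\vec r,\vec w)$ with $r_1,r_2\in(1,\infty]$ and $\vec w\in A_{(r_1,r_2)}$. Let $(\vec p,\vec u)$ be the given compact point. For $\theta\in(0,1)$ close to $1$ I define
\[
\frac1{s_i}=\frac{1}{\theta}\Big(\frac1{r_i}-\frac{1-\theta}{p_i}\Big),\qquad v_i=\big(w_i\,u_i^{-(1-\theta)}\big)^{1/\theta},\qquad i=1,2.
\]
With these choices the Stein--Weiss formula gives
\[
[L^{p_i}(u_i^{p_i}),L^{s_i}(v_i^{s_i})]_\theta=L^{r_i}(w_i^{r_i}),
\]
and similarly for the target space. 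As $\theta\to1$ we have $s_i\to r_i$ and $v_i\to w_i$, and the $s_i$ stay in $(1,\infty]$ since $r_i>1$.

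\textbf{Step 3: the weight condition (main obstacle).} The hard step is to show that $\vec v\in A_{(s_1,s_2)}$ once $\theta$ is close enough to $1$. I would use the characterization from \cite{LOPTTR2009}:
\[
\vec v\in A_{\vec s}\iff v^{s}\in A_{2s}\ \text{ and }\ v_i^{-s_i'}\in A_{2s_i'},\quad i=1,2,
\]
with the obvious modification when $s_i=\infty$.

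Each of these powers has the form $\big(w^{r}\big)^{a(\theta)}\cdot\big(u^{p}\big)^{-b(\theta)}$, with $a(\theta)\to1$ and $b(\theta)\to0$. I would then invoke two standard facts:
\begin{itemize}
\item the openness/reverse H\"older property, $W\in A_t\Rightarrow W^{1+\epsilon}\in A_t$ for small $\epsilon$;
\item the factorization fact that $W_1^{\alpha}W_2^{-\beta}$ stays in $A_t$ for small exponents when $W_1,W_2$ lie in suitable $A$ classes, proved via H\"older's inequality.
\end{itemize}
Together these give the required membership. The cases $p_i=\infty$ or $r_i=\infty$ need separate bookkeeping, since the relevant dual weights degenerate.

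\textbf{Step 4: transferring compactness.} I would invoke a bilinear version of the Cwikel--Kalton compact interpolation theorem for the complex method, as in Cobos--Fern\'andez-Cabrera--Mart\'inez. This requires mild conditions on the couples, such as density of the intersection and $\mathrm{UMD}$-type or reflexivity conditions, which weighted $L^p$ spaces satisfy for finite exponents. The theorem says: compact at the $\theta=0$ end and bounded at the $\theta=1$ end imply compact at every intermediate $\theta$. Applying it with the couples of Step 2 gives compactness on $\mathcal{X}(\vec r,\vec w)$.

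\textbf{Endpoint exponents.} Cases with an infinite exponent do not satisfy the hypotheses of the interpolation theorem directly. I would treat them either by restricting to the closure of simple functions, which suffices for the applications in this paper, or by first interpolating to reach finite exponents and then arguing by duality.
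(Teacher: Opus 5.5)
The paper gives no proof of this statement. It is quoted from \cite[Theorem 1.9]{cao2024} and used as a black box in the proof of (2)$\Rightarrow$(1), so your proposal has to stand on its own. Steps 1--3 follow the Hyt\"onen--Lappas scheme and are sound in the interior range. The Stein--Weiss bookkeeping in Step 2 is right, and for $\theta$ near $1$ with $1<r_i<\infty$, the characterization $v^{s}\in A_{2s}$, $v_i^{-s_i'}\in A_{2s_i'}$ together with openness and reverse H\"older does give $(v_1,v_2)\in A_{(s_1,s_2)}$.

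\textbf{First gap: the endpoint $r_i=\infty$.} There your formula gives $1/s_i=-(1-\theta)/(\theta p_i)<0$ whenever $p_i<\infty$, so the claim that $s_i\in(1,\infty]$ ``since $r_i>1$'' is false. This is structural, not bookkeeping. Any interpolation with the compact point as one endpoint produces $1/r_i=(1-\theta)/p_i+\theta/s_i>0$ for $\theta<1$. So an $L^\infty$ slot can never be reached from a compact point whose exponent in that slot is finite. Your fallbacks do not close this. Restricting to the closure of simple functions proves a strictly weaker statement, and Theorem~\ref{main thm}(1) genuinely needs $p_i=\infty$. The duality route does not supply it either, since $(L^\infty)^*\neq L^1$ and compactness of a bilinear operator is not inherited by its adjoints. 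For example, $(x,y)\mapsto\langle x,y\rangle$ from $H\times H$ to $\C$ is compact, but its adjoint $(c,y)\mapsto c\langle\cdot,y\rangle\in H^*$ is not. A genuinely different mechanism is needed for this case.

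\textbf{Second gap: Step 4, in the quasi-Banach range and at $L^1$, $L^\infty$.} The conclusion includes targets $L^r(w^r)$ with $r\in(1/2,1)$, e.g.\ $r_1=r_2=3/2$. The given compact point may also have $p\le 1$ or $p_i\in\{1,\infty\}$. Since $1/r=(1-\theta)/p+\theta/s$, a target with $r<1$ forces $p<1$ or $s<1$. In your scheme $s\to r<1$, so the target couple contains an $L^s$ space that is not even normable. Meanwhile $L^1$ and $L^\infty$ at the compact end are neither reflexive nor UMD. Your verification that weighted $L^p$ spaces satisfy ``the hypotheses for finite exponents'' therefore holds only when every exponent involved lies in $(1,\infty)$. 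The Cwikel--Kalton-type theorem you invoke is a complex-method result about Banach couples, so it does not apply to the remaining cases. Covering them requires a compactness-interpolation theorem valid for quasi-Banach couples that still returns the strong-type bound $L^{r_1}\times L^{r_2}\to L^r$. Simply switching to the real method is not enough, because the classical two-point bilinear real interpolation theorem loses in the fine index. As written, the proposal proves the statement at best when $r_1,r_2,r$ and the given $p_1,p_2,p$ all lie in $(1,\infty)$.
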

\subsection{Dyadic grids}
Begin with the following standard dyadic grid $\mathcal{D}$ in $\R^n$, 
\[
\mathcal{D}:=\{2^{-k}([0,1)^n+m):k\in\Z,m\in\Z^n\}.
\] 
Given $I\in\mathcal{D}$, we use the following notations through this paper.
\begin{enumerate}
    \item $\ell(I)$ denotes the sidelength of $I$;
    \item $I^{(k)}$ denotes the $k$-th parent of $I$, $I\subset I^{(k)}$ and $\ell(I^{(k)})=2^k\ell(I)$;
    \item $\mathrm{ch}(I)$ is the collection of the children of $I$, $\mathrm{ch}(I)=\{J\in\mathcal{D}:J^{(1)}=I\}$;
    \item $E_If=\ave{f}_I1_I$ is the conditional expectation on $I$;
    \item $\Delta_If=\sum_{J\in\mathrm{ch}(I)}E_Jf-E_If$ is the martingale difference on $I$.
\end{enumerate}
The shifted lattice is defined by 
\begin{align*}
  \mathcal{D}(\sigma):=\Big\{L+\sigma:=L+\sum_{i:2^{-i}<\ell(L)}2^{-i}\sigma_i: L\in\mathcal{D}\Big\}\text{ for }\sigma=(\sigma_i)_{i\in\Z}\in(\{0,1\}^n)^{\Z}.
\end{align*}
$\mathcal{D}(\sigma)$ is still a dyadic grid, so it inherits the notations above. For each $\sigma\in(\{0,1\}^n)^{\Z}$, Let $\mathcal{D}_N(\sigma)$ denotes the $N$-truncation of $\mathcal{D}(\sigma)$, namely
    \begin{equation*}
       \mathcal{D}_N(\sigma):=\Big\{I\in\mathcal{D}(\sigma):I\cap[-2^N,2^N)^n\neq\emptyset\text{ and }2^{-N}\leq \ell(I)\leq 2^{N}\Big\}.
    \end{equation*}

Let $\prob_{\sigma}$ be the product probability measure on $(\{0,1\}^n)^{\Z}$. We say that $G$ is $k$-good or $G\in\mathcal{D}(\sigma,k)$ if $G\in\mathcal{D}(\sigma)$ and
$\mathrm{dist}\big(G,\partial G^{(k)}\big)\geq\ell\big(G^{(k)}\big)/{4}=2^{k-2}\ell(G).$
From \cite{GH2018}, it follows that
\[
\prob_{\sigma}\big(\{\sigma:L\in\mathcal{D}(\sigma,k)\}\big)=\frac{1}{2^n},\,\text{ for all }k\geq 2,
\]
and $G\in\mathcal{D}(\sigma,k)$ implies that $(G\dotplus l)^{(k)}=G^{(k)}$ for $l\in\Z^n$ with $\abs{l}\leq2^{k-2}$, where $G\dotplus l=G+l\ell(G)$.

Furthermore, for all $k\geq2$, the $k$-goodness of a shifted cube and its position are independent. This follows from the fact that the position of $L+\sigma$ depends only on the values of $\sigma_i$ with $2^{-i}<\ell(L)$, whereas its goodness depends only on the remaining $\sigma_i$. 

For $f\in L^2$, we have the following dyadic martingale difference decomposition 
\[
f=\sum_{I\in\mathcal{D}(\sigma)}\Delta_If.
\]
This decomposition can be further expressed in terms of Haar functions, which are defined as follows.
\begin{defn}
    Let $h_I^0,h_I^1$ be standard Haar functions on interval $I\in\R$, namely
    $$h_I^0=\abs{I}^{1/2}1_I\text{ and }h_I^1=\abs{I}^{-1/2}(1_{I^{-}}-1_{I^{+}}),$$
    where $I^-,I^+$ denote the left and right halves of $I$ respectively. In higher dimension, for cube $I=I_1\times\cdots\times I_n\in\R^n$, the Haar function $\{h_I^\eta\}_{\eta\in\{0,1\}^n}$ may be defined by 
    \[
    h_I^\eta:=h_{I_1}^{\eta_1}\otimes\cdots\otimes h_{I_n}^{\eta_n},\,\eta=(\eta_1,\dots,\eta_n).
    \]
    A direct calculation gives  
    \[
    f=\sum_{I\in\mathcal{D}(\sigma)}\sum_{\eta\neq0}\ave{f,h_I^\eta}h_I^\eta.
    \]
    Usually, we suppress the $\eta$ summation and just write $f=\sum_{I\in\mathcal{D}(\sigma)}\ave{f,h_I}h_I$. 
\end{defn}
\begin{defn}
    Given $N\in\N$ and $\sigma\in(\{0,1\}^n)^{\Z}$, we define the \textit{projection operator} and its \textit{orthogonal operator} by 
    \begin{equation*}
        P_N^\sigma f:=\sum_{Q\in\mathcal{D}_N(\sigma)}\ave{f,h_Q}h_Q\,\text{ and }\,(P_N^\sigma)^\bot f:=f- P_N^\sigma f.
    \end{equation*}
\end{defn} 
As a corollary of the square functions estimate in \cite[Theorem 2.1]{Wilson2008}, we have
\begin{lem}
    The projection operator and its orthogonal operator defined above are bounded from $L^p\to L^p$ for all $1<p<\infty$.
\end{lem}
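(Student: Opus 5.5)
The plan is to derive the boundedness of both $P_N^\sigma$ and $(P_N^\sigma)^\bot$ from the square function characterization of $L^p$ with respect to a single dyadic grid, \cite[Theorem 2.1]{Wilson2008}. Fix $\sigma$ and write $\mathcal{D}=\mathcal{D}(\sigma)$. That result gives, for $1<p<\infty$ and $f\in L^p$,
\[
\norm{f}{L^p}\sim_{p,n}\Big\|\Big(\sum_{I\in\mathcal{D}}\abs{\Delta_I f}^2\Big)^{1/2}\Big\|_{L^p},
\]
where $\Delta_I f=\sum_{\eta\neq 0}\ave{f,h_I^\eta}h_I^\eta$. We may use this two-sided equivalence because $\ave{f}_Q\to 0$ as $\ell(Q)\to\infty$ whenever $f\in L^p$ with $p<\infty$, so the martingale difference expansion $f=\sum_{I}\Delta_I f$ converges in $L^p$.

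The key observation is that $P_N^\sigma$ is simply the restriction of this Haar expansion to the subcollection $\mathcal{D}_N(\sigma)\subset\mathcal{D}$, that is, $P_N^\sigma f=\sum_{I\in\mathcal{D}_N(\sigma)}\Delta_I f$. Orthogonality of the Haar functions gives $\Delta_J(P_N^\sigma f)=\Delta_J f$ when $J\in\mathcal{D}_N(\sigma)$ and $\Delta_J(P_N^\sigma f)=0$ otherwise. Since $\mathcal{D}_N(\sigma)$ is a finite family, $P_N^\sigma f$ is a finite sum of Haar functions and lies in $L^p$. Applying the lower bound of the square function equivalence to $P_N^\sigma f$ and then the upper bound to $f$, we get
\[
\norm{P_N^\sigma f}{L^p}\lesssim\Big\|\Big(\sum_{I\in\mathcal{D}_N(\sigma)}\abs{\Delta_I f}^2\Big)^{1/2}\Big\|_{L^p}\le\Big\|\Big(\sum_{I\in\mathcal{D}}\abs{\Delta_I f}^2\Big)^{1/2}\Big\|_{L^p}\lesssim\norm{f}{L^p}.
\]
The constant depends only on $p$ and $n$, and is uniform in $N$ and $\sigma$. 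The orthogonal operator is then handled by the triangle inequality, $\norm{(P_N^\sigma)^\bot f}{L^p}\le\norm{f}{L^p}+\norm{P_N^\sigma f}{L^p}$.

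The only step needing care is the first, namely the validity of the two-sided square function bound and the convergence of the expansion in $L^p$. Once these are granted from \cite{Wilson2008}, the rest is pure monotonicity in the index set. An alternative route would be to note that $P_N^\sigma$ is a Haar multiplier with coefficients in $\{0,1\}$ and invoke unconditionality of the Haar basis; this is the same argument in different language.
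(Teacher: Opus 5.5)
Your proposal is correct and matches the paper's approach: the paper gives no separate argument and simply records the lemma as a corollary of the dyadic square function estimate in Wilson's Theorem 2.1. Your derivation is the natural way to fill that in — restricting the square function to $\mathcal{D}_N(\sigma)$, using the two-sided bound, and then the triangle inequality for $(P_N^\sigma)^\bot$. Your remark that the constant is uniform in $N$ and $\sigma$ is worth keeping, since the subsequent compactness lemma uses exactly this uniformity.
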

This projection operator can be used to characterize compactness, see e.g. \cite[Theorem 1.6]{cao2024}. Here we present a different proof for the sake of completeness.
\begin{lem}\label{projection}
    Let $T$ be a bilinear operator, $1/p=1/p_1+1/p_2>0$ with $p,p_1,p_2\in(1,\infty]$ and $\sigma\in(\{0,1\}^n)^{\Z}$, if $T$ is bounded from $L^{p_1}\times L^{p_2}\to L^p$, then $T$ is compact from $L^{p_1}\times L^{p_2}\to L^p$ if and only if $\lim_{N\to\infty}\norm{(P_N^\sigma)^\bot T}{L^{p_1}\times L^{p_2}\to L^p}=0$.
\end{lem}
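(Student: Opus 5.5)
The plan is to treat the two implications separately. Both rest on two facts about the projections: each $P_N^\sigma$ has finite rank, and the operators $(P_N^\sigma)^\bot$ are uniformly bounded on $L^p$ and converge strongly to $0$. Throughout, $1<p<\infty$ because $1/p>0$ and $p\in(1,\infty]$, so the preceding lemma on $L^p$-boundedness of the projections applies.

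\emph{Sufficiency.} The collection $\mathcal{D}_N(\sigma)$ is finite, since it consists of cubes meeting $[-2^N,2^N)^n$ with side lengths in $[2^{-N},2^N]$. Hence $P_N^\sigma$ maps $L^p$ into the finite-dimensional space spanned by $\{h_Q^\eta: Q\in\mathcal{D}_N(\sigma),\ \eta\neq 0\}$. Since $T$ is bounded, $P_N^\sigma T$ maps the product of the unit balls $B_1\times B_2$ of $L^{p_1}\times L^{p_2}$ into a bounded subset of a finite-dimensional space. That set is precompact, so $P_N^\sigma T$ is compact. Now write $T=P_N^\sigma T+(P_N^\sigma)^\bot T$. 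Given $\varepsilon>0$, choose $N$ with $\norm{(P_N^\sigma)^\bot T}{L^{p_1}\times L^{p_2}\to L^p}<\varepsilon/2$. A finite $\varepsilon/2$-net for $P_N^\sigma T(B_1\times B_2)$ is then an $\varepsilon$-net for $T(B_1\times B_2)$. So $T(B_1\times B_2)$ is totally bounded, i.e. $T$ is compact.

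\emph{Necessity.} Let $K$ be the closure of $T(B_1\times B_2)$, which is compact in $L^p$ by assumption. It suffices to show that $(P_N^\sigma)^\bot\to 0$ uniformly on $K$, and I will do this in three steps.
\begin{enumerate}
\item \emph{Uniform bound.} I need $\sup_N\norm{(P_N^\sigma)^\bot}{L^p\to L^p}=:C<\infty$. The square-function estimate behind the preceding lemma gives this, because the bound for a Haar multiplier with coefficients in $\{0,1\}$ does not depend on which coefficients are kept.
\item \emph{Strong convergence.} I will show $(P_N^\sigma)^\bot g\to 0$ for every $g\in L^p$. First, finite linear combinations of Haar functions of $\mathcal{D}(\sigma)$ are dense in $L^p$. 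Indeed, for $p>1$ we have $E_I g\to 0$ in $L^p$ as $\ell(I)\to\infty$ and $E_I g\to g$ as $\ell(I)\to 0$, so $g=\sum_I\Delta_I g$ with convergence in $L^p$. Second, for a finite Haar combination $g$, every cube involved lies in $\mathcal{D}_N(\sigma)$ once $N$ is large, so $(P_N^\sigma)^\bot g=0$ for such $N$. Density together with the uniform bound from step 1 then gives strong convergence for all $g\in L^p$.
\item \emph{Uniform convergence on $K$.} This is the standard equicontinuity argument. Take a finite $\varepsilon$-net $\{g_j\}$ of $K$, and choose $N_0$ such that $\norm{(P_N^\sigma)^\bot g_j}{L^p}<\varepsilon$ for all $j$ and all $N\ge N_0$. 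Then every $g\in K$ satisfies $\norm{(P_N^\sigma)^\bot g}{L^p}\le C\varepsilon+\varepsilon$ for $N\ge N_0$.
\end{enumerate}
Consequently $\norm{(P_N^\sigma)^\bot T}{L^{p_1}\times L^{p_2}\to L^p}=\sup_{g\in T(B_1\times B_2)}\norm{(P_N^\sigma)^\bot g}{L^p}\to 0$.

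The main point requiring care is the uniform-in-$N$ bound in step 1, which underpins the strong convergence in step 2. I would derive it directly from the square-function characterization $\norm{f}{L^p}\simeq\norm{(\sum_I\abs{\Delta_I f}^2)^{1/2}}{L^p}$ in the cited result of Wilson: deleting terms from the square function can only decrease it, which yields a constant independent of $N$.
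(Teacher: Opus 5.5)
Your proposal is correct and takes essentially the same route as the paper. Sufficiency comes from the finite rank of $P_N^\sigma T$ together with the operator-norm approximation $T=P_N^\sigma T+(P_N^\sigma)^\bot T$. Necessity comes from a finite $\varepsilon$-net of $T(B_1\times B_2)$ combined with $P_N^\sigma g\to g$ on the net points.

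Your version is in fact more careful than the paper's. You justify the uniform bound $\sup_N\norm{(P_N^\sigma)^\bot}{L^p\to L^p}<\infty$ via the square function, and the strong convergence via density of finite Haar sums. The paper uses both facts tacitly; its $2\varepsilon$ bound silently absorbs the uniform constant of $P_N^\sigma$, which your $C\varepsilon+\varepsilon$ makes explicit.
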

\begin{proof}
    The sufficiency part is immediate. Indeed, observe that $P_N^\sigma T$ is  finite rank, so of course it is compact. Then $\lim_{N\to\infty}\norm{(P_N^\sigma)^\bot T}{L^{p_1}\times L^{p_2}\to L^p}=0$ implies that 
    \begin{equation*}
        P_N^\sigma T\to T\text{ as }N\to\infty.
    \end{equation*}
    Therefore $T$ is compact from $L^{p_1}\times L^{p_2}\to L^p$. Conversely, suppose $T$ is compact. Then for any $\varepsilon>0$, there exists a collection of finitely many functions 
    $\{g_k\}_{k=1}^m\subset L^{p}$
     such that 
\begin{equation*}
    \Big\{T(f_1,f_2):\norm{f_j}{L^{p_j}}\leq1,j=1,2\Big\}\subset\bigcup_{k=1}^m B(g_k,\varepsilon).
\end{equation*}
Then for any fixed $f_1,f_2$ with $\norm{f_1}{L^{p_1}}\leq1$ and $\norm{f_2}{L^{p_2}}\leq1,$ there exists some $g_k$ satisfying
\[
\norm{T(f_1,f_2)-g_k}{L^p}\leq\varepsilon.
\]
Hence,
\begin{align*}
    &\norm{T(f_1,f_2)-P_N^\sigma T(f_1,f_2)}{L^p}\\
    &\leq\norm{T(f_1,f_2)-g_k}{L^p}+\norm{P_N^\sigma\big(T(f_1,f_2)-g_k\big)}{L^p}+\norm{P_N^\sigma g_k-g_k}{L^p}\leq2\varepsilon+\norm{P_N^\sigma g_k-g_k}{L^p}.
\end{align*}
Taking the supremum on both sides, we obtain
\begin{equation*}
    \sup_{\norm{f_j}{L^{p_j}}\leq 1}\norm{T(f_1,f_2)-P_N^\sigma T(f_1,f_2)}{L^p}\leq2\varepsilon+\sum_{k=1,\dots,m}\norm{P_N^\sigma g_k-g_k}{L^p},
\end{equation*}
which leads to the desired result.
\end{proof}
\begin{rem}
    When $p\leq1$, the sufficiency still holds, but the necessary fails, as shown by the counterexample in \cite{cao2024}.
\end{rem}
\section{representation theorem}
In this section, we establish a representation theorem that writes the operator $T$ as a sum of model operators in the expectation sense. We firstly recall the definitions of compact shift and paraproduct in \cite{cao2024}.
\begin{defn}\label{defn3.1}
    Given $i,j,k\in\No$, the definition of \textit{compact bilinear dyadic shift} is given by 
    \begin{equation*}
        S_{\mathcal{D}(\sigma)}^{i,j,k}(f_1,f_2):=\sum_{K\in\mathcal{D}(\sigma)}A_K^{i,j,k}(f_1,f_2),
    \end{equation*}
    with 
    \begin{equation*}
        \ave{A_K^{i,j,k}(f_1,f_2),f_3}:=\sum_{\substack{I_1,I_2,I_3\in\mathcal{D}(\sigma)\\I_1^{(i)}=I_2^{(j)}=I_3^{(k)}=K}}F(K)a_{I_m,K}\ave{f_1,\tilde{h}_{I_1}}\ave{f_2,\tilde{h}_{I_2}}\ave{f_3,\tilde{h}_{I_3}},
    \end{equation*} 
    where $F\in\mathcal{F}_0$, $\tilde{h}_{I_m}\in\{h_{I_m}^0,h_{I_m}\}$, $m=1,2,3$, there at least exists two of them are cancellative and the coefficients $a_{I_m,K}$ satisfy
    \[
    \abs{a_{I_m,K}}\lesssim \frac{\abs{I_1}^{1/2}\abs{I_2}^{1/2}\abs{I_3}^{1/2}}{\abs{K}^2}.
    \]
\end{defn}
\begin{defn}
    We say that $\pi_{b,\sigma}$ is a \textit{bilinear dyadic paraproduct} if 
    \[
    \ave{\pi_{b,\sigma}(f_1,f_2),f_3}=\sum_{I\in\mathcal{D}(\sigma)}\ave{f_1}_I\ave{f_2}_I\ave{\Delta_Ib,f_3},
    \]
    or other symmetric forms(i.e. the role of $f_3$ is replaced by $f_1$ or $f_2$).
\end{defn}
The compactness of these model operators was established in \cite{cao2024}.
\begin{lem}\cite[Theorem 1.7]{cao2024}\label{compactness of shifts}
    For all $i,j,k\in\N$, $\Exp_\sigma S_{\mathcal{D}(\sigma)}^{i,j,k}$ is compact from $L^{p_1}(w_1^{p_1})\times L^{p_2}(w_2^{p_2})$ to $L^{p}(w^{p})$ for all $p_1,p_2\in(1,\infty]$ for all $(w_1,w_2)\in A_{(p_1,p_2)}$, where $1/p=1/p_1+1/p_2>0$ and $w=w_1w_2$.
\end{lem}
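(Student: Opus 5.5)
The plan is to obtain the claim from the extrapolation of compactness, Theorem \ref{extrapolation of compactness}. That theorem needs two inputs:
\begin{enumerate}
\item weighted boundedness of $\Exp_\sigma S_{\mathcal{D}(\sigma)}^{i,j,k}$ at one exponent triple and for \emph{all} weights in the corresponding class;
\item compactness at one exponent triple for one weight.
\end{enumerate}
For the first input, take $q_1=q_2=2$, $q=1$. Each $S_{\mathcal{D}(\sigma)}^{i,j,k}$ is a bilinear dyadic shift of complexity $(i,j,k)$ whose coefficients are dominated by $\norm{F}{\infty}$ times the standard normalization $\abs{I_1}^{1/2}\abs{I_2}^{1/2}\abs{I_3}^{1/2}/\abs{K}^2$. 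The known weighted theory of bilinear dyadic shifts (sparse domination, as in \cite{LOPTTR2009}-type multilinear $A_{\vec p}$ theory) then gives
\[
\norm{S_{\mathcal{D}(\sigma)}^{i,j,k}}{L^{2}(v_1^{2})\times L^{2}(v_2^{2})\to L^{1}(v)}\lesssim_{[\vec v]}(1+\max\{i,j,k\})\norm{F}{\infty}
\]
uniformly in $\sigma$. Averaging in $\sigma$ preserves this bound.

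For the second input, work unweighted with $p_1=p_2=4$, $p=2$, so that the target space is in the Banach range and Lemma \ref{projection} is available if needed. Fix $\varepsilon>0$. Since $F\in\mathcal{F}_0$, there is $M$ such that $F(K)<\varepsilon$ whenever $\ell(K)<2^{-M}$, $\ell(K)>2^M$, or $\abs{c_K}>2^M$. Split
\[
S_{\mathcal{D}(\sigma)}^{i,j,k}=S_{M}^{\sigma}+R_M^\sigma,
\]
where $S_M^\sigma$ keeps only the finitely many $K\in\mathcal{D}(\sigma)$ with $2^{-M}\le\ell(K)\le 2^M$ and $\abs{c_K}\le 2^M$.

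The remainder $R_M^\sigma$ is again a shift of the same form, with $F$ replaced by $F1_{\{F<\varepsilon\}}$. The uniform bound above therefore gives
\[
\norm{\Exp_\sigma R_M^\sigma}{L^4\times L^4\to L^2}\lesssim_{i,j,k}\varepsilon .
\]
Since compact operators are closed in operator norm, it then suffices to show that $\Exp_\sigma S_M^\sigma$ is compact.

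For each fixed $\sigma$, the operator $S_M^\sigma$ has finite rank: there are finitely many $K$, and each $K$ has finitely many $I_3$ with $I_3^{(k)}=K$. Its range consists of functions supported in the fixed cube $[-2^{M+1},2^{M+1}]^n$ and constant on dyadic cubes of side $2^{-M-k-1}$. The averaged operator, however, is not obviously of finite rank, because the grid moves with $\sigma$. I expect this to be the main obstacle.

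The first thing I would try is the Kolmogorov--Riesz criterion on the image of the unit ball. Uniform $L^2$ bounds and uniform compact support are immediate. The remaining condition is translation equicontinuity:
\[
\sup_{\norm{f_1}{L^4},\norm{f_2}{L^4}\le1}\norm{\tau_h\Exp_\sigma S_M^\sigma(f_1,f_2)-\Exp_\sigma S_M^\sigma(f_1,f_2)}{L^2}\to0\quad\text{as }h\to0.
\]
Only the finitely many scales $\ell\ge 2^{-M-k}$ matter. At each such scale, translating the Haar functions by $h$ changes $\tilde h_{I_3}$ in $L^2$ only on a boundary layer of relative measure about $\abs{h}/\ell(I_3)$. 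I would bound this boundary-layer contribution using the coefficient bound and the $L^4$ norms of $f_1,f_2$.

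If that estimate proves awkward, the fallback is to replace $\Exp_\sigma$ by a finite average. The relevant $K$ only depend on $\sigma_i$ for finitely many $i$, namely those with $2^{-M-\max(i,j,k)}\lesssim 2^{-i}\le 2^M$; the remaining coordinates affect only cubes that do not enter $S_M^\sigma$. Then $\Exp_\sigma S_M^\sigma$ is a finite convex combination of finite-rank operators and hence has finite rank.

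With both inputs in hand, Theorem \ref{extrapolation of compactness} yields compactness on $L^{p_1}(w_1^{p_1})\times L^{p_2}(w_2^{p_2})\to L^p(w^p)$ for all $p_1,p_2\in(1,\infty]$ and all $(w_1,w_2)\in A_{(p_1,p_2)}$.
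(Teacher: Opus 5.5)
The paper does not prove this lemma; it quotes it from \cite[Theorem 1.7]{cao2024}, so your proposal can only be checked on its own merits. Your main line is correct. The architecture is the standard one, and it is the scheme the paper itself runs for (2)$\Rightarrow$(1): weighted boundedness for all weights at one exponent triple, compactness at one unweighted triple, then Theorem \ref{extrapolation of compactness}. The truncation step is sound. On the discarded cubes $F(K)<\varepsilon$, so $R_M^\sigma$ is a shift with coefficients bounded by $\varepsilon$ times the usual normalization. Hence $\norm{\Exp_\sigma R_M^\sigma}{L^4\times L^4\to L^2}\lesssim_{i,j,k}\varepsilon$, and norm-closedness of compact bilinear operators leaves only $\Exp_\sigma S_M^\sigma$.

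The Kolmogorov--Riesz route you list first works and should be the argument, not a tentative option. What makes it go through is that every bound is uniform in $\sigma$:
\begin{itemize}
\item For each $\sigma$, $S_M^\sigma(f_1,f_2)=\sum_{I_3}\alpha^\sigma_{I_3}\tilde h_{I_3}$ with at most $C(n,M,k)$ terms. There are $2M+1$ admissible scales, at most $(2^{2M+1}+1)^n$ admissible $K$ per scale, and $2^{nk}$ cubes $I_3$ per $K$.
\item Every such $I_3$ satisfies $\ell(I_3)\ge 2^{-M-k}$.
\item Using $\abs{\ave{f,\tilde h_I}}\le\norm{f}{L^4}\abs{I}^{1/4}$ and $2^{-nM}\le\abs{K}\le 2^{nM}$, the coefficients satisfy $\abs{\alpha^\sigma_{I_3}}\lesssim_{n,M,i,j,k}\norm{f_1}{L^4}\norm{f_2}{L^4}$.
\end{itemize}
Since $\norm{\tilde h_I(\cdot+h)-\tilde h_I}{L^2}\lesssim(\abs{h}/\ell(I))^{1/2}$, you get
\[
\norm{S_M^\sigma(f_1,f_2)(\cdot+h)-S_M^\sigma(f_1,f_2)}{L^2}\lesssim_{n,M,i,j,k}\abs{h}^{1/2}
\]
on the product of unit balls, uniformly in $\sigma$. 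Minkowski's inequality transfers this bound to $\Exp_\sigma S_M^\sigma$. Together with the uniform $L^2$ bound and the common support in $[-2^{M+1},2^{M+1}]^n$, this gives precompactness of the image.

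The fallback, however, is false and should be dropped.
\begin{itemize}
\item The position of $L+\sigma$ is $L+\sum_{m:2^{-m}<\ell(L)}2^{-m}\sigma_m$. It therefore depends on all the finer coordinates, infinitely many of them, and only the coarse coordinates with $2^{-m}\ge 2^M$ are irrelevant.
\item Under $\prob_\sigma$ this offset is uniformly distributed on $[0,\ell(L))^n$, so $\Exp_\sigma$ is an average over a continuum of translated grids.
\item Consequently $\Exp_\sigma S_M^\sigma$ is in general not of finite rank. Already $\Exp_\sigma\sum_{Q\in\mathcal{D}(\sigma),\,\ell(Q)=1}\ave{f}_Q1_Q$ is convolution with a product of tent functions.
\end{itemize}
This is exactly why a criterion that is uniform in $\sigma$, such as Kolmogorov--Riesz, is needed. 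Also, avoid using $i$ both as a complexity index and as the coordinate index of $\sigma$.
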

\begin{lem}\cite[Theorem 1.8]{cao2024}\label{compactness of paraproducts}
    Given $b\in\mathrm{CMO}(\R^n)$, then $\Exp_\sigma\pi_{b,\sigma}$ is compact from $L^{p_1}(w_1^{p_1})\times L^{p_2}(w_2^{p_2})$ to $L^{p}(w^{p})$ for all $p_1,p_2\in(1,\infty]$ for all $(w_1,w_2)\in A_{(p_1,p_2)}$, where $1/p=1/p_1+1/p_2>0$ and $w=w_1w_2$.
\end{lem}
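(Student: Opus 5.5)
The plan is to reduce everything to a single unweighted exponent via Theorem \ref{extrapolation of compactness}, and then to prove compactness there by approximating $b$ and truncating the dyadic sum.

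\textbf{Weighted boundedness.} For $b\in\BMO$, each $\pi_{b,\sigma}$ is dominated by sparse forms $\sum_{Q\in\mathcal S}\ave{|f_1|}_Q\ave{|f_2|}_Q\ave{|f_3|}_Q|Q|$, with constants $\lesssim\norm{b}{\BMO}$ uniformly in $\sigma$. This holds for all three symmetric forms. Hence $\pi_{b,\sigma}$ is bounded from $L^{q_1}(v_1^{q_1})\times L^{q_2}(v_2^{q_2})$ to $L^q(v^q)$ for all $(v_1,v_2)\in A_{(q_1,q_2)}$, by the multilinear weighted theory \cite{LOPTTR2009}. The bound is uniform in $\sigma$, so it passes to $\Exp_\sigma\pi_{b,\sigma}$. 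By Theorem \ref{extrapolation of compactness} it then suffices to show that $\Exp_\sigma\pi_{b,\sigma}$ is compact from $L^{4}\times L^{4}$ to $L^2$; any unweighted Banach triple would do.

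\textbf{Reduction to smooth $b$.} The map $b\mapsto\pi_{b,\sigma}$ is linear, and $\norm{\pi_{b,\sigma}}{L^4\times L^4\to L^2}\lesssim\norm{b}{\BMO}$ uniformly in $\sigma$. Compact operators form a closed set, and $C_c^\infty$ is dense in CMO by definition. So it suffices to treat $b\in C_c^\infty$ with $\supp b\subset[-R,R]^n$, and moreover to prove that each $\pi_{b,\sigma}$ is a norm limit of compact operators with errors uniform in $\sigma$. The expectation then inherits compactness, since $\Exp_\sigma$ of uniformly approximable operators is approximable in norm by $\Exp_\sigma$ of the approximants. For measurability one can alternatively note that the truncated operators below depend on only finitely many $\sigma_i$.

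\textbf{Truncation.} Split the sum as $\pi_{b,\sigma}=\sum_{I\in\mathcal D_N(\sigma)}+\sum_{I\notin\mathcal D_N(\sigma)}$.

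The first part is a finite sum of operators $(f_1,f_2)\mapsto\ave{f_1}_I\ave{f_2}_I\Delta_Ib$, each of rank one, hence compact. The symmetric forms are handled the same way, with finite-rank pieces.

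The tail is itself a paraproduct, with symbol coefficients $\ave{b,h_I}$ restricted to $I\notin\mathcal D_N(\sigma)$. Its norm is controlled by the Carleson constant
\[
\sup_J\frac1{|J|}\sum_{I\subset J,\,I\notin\mathcal D_N}|\ave{b,h_I}|^2 .
\]
The main obstacle is to show this tends to $0$ uniformly in $\sigma$. I would estimate the three kinds of omitted cubes separately:
\begin{itemize}
\item Cubes with $I\cap[-2^N,2^N)^n=\emptyset$ and $2^N>R$ give $\ave{b,h_I}=0$, since $I\cap\supp b=\emptyset$.
\item Small cubes with $\ell(I)<2^{-N}$ satisfy $|\ave{b,h_I}|\lesssim\ell(I)\norm{\nabla b}{\infty}|I|^{1/2}$. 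Summing over $I\subset J$ gives at most $\lesssim 2^{-2N}|J|$ when $\ell(J)\lesssim 1$, and $\lesssim 2^{-N}\norm{\nabla b}{\infty}|J|$ in general after also using $|\ave{b,h_I}|\le\norm{b}{1}|I|^{-1/2}$ for the cubes meeting $\supp b$.
\item Large cubes with $\ell(I)>2^N$ satisfy $|\ave{b,h_I}|^2\le\norm{b}{1}^2|I|^{-1}$. Only a bounded number per generation meet $\supp b$, so the geometric sum is $\lesssim\norm{b}{1}^2 2^{-Nn}|J|/|J|\cdot$const. More precisely, this is $\lesssim\norm{b}{1}^22^{-nN}\cdot|J|^{-1}|J|$ after normalizing, and it tends to $0$.
\end{itemize}
Hence the tail norm tends to $0$ as $N\to\infty$, uniformly in $\sigma$, which proves compactness at $L^4\times L^4\to L^2$ and, by extrapolation, in the full weighted range.
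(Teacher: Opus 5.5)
The paper gives no proof of this lemma; it imports it from \cite[Theorem 1.8]{cao2024}, so your argument has to stand on its own. Its architecture is sound. Uniform-in-$\sigma$ sparse bounds together with Theorem \ref{extrapolation of compactness} reduce everything to compactness at $L^4\times L^4\to L^2$. (Take the boundedness exponent with $q\ge1$, e.g.\ $q=2$, so that Minkowski's inequality transfers the uniform bound to $\Exp_\sigma$.) The estimate $\norm{\pi_{b,\sigma}}{L^4\times L^4\to L^2}\lesssim\norm{b}{\BMO}$ then reduces matters to $b\in C_c^\infty$. Your Carleson tail estimates are correct in substance and uniform in $\sigma$, though the bookkeeping is garbled. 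Each generation of cubes $I\subset J$ has total measure at most $\abs{J}$, so the small cubes contribute $\lesssim\norm{\nabla b}{\infty}^2 2^{-2N}$ for every $J$, with no case distinction. The large cubes contribute $\lesssim\norm{b}{1}^2 2^{-nN}/\abs{J}\le\norm{b}{1}^2 2^{-2nN}$, because $\abs{J}>2^{nN}$ whenever that sum is nonempty.

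The step that is not justified is the last one. Write $\pi^N_{b,\sigma}$ for the part of $\pi_{b,\sigma}$ over $\mathcal{D}_N(\sigma)$. The uniform bound $\norm{\pi_{b,\sigma}-\pi^N_{b,\sigma}}{L^4\times L^4\to L^2}\le\varepsilon_N\to0$ only reduces compactness of $\Exp_\sigma\pi_{b,\sigma}$ to compactness of $\Exp_\sigma\pi^N_{b,\sigma}$. The reason you give for the latter is false. A cube of side $2^{-N}$ in $\mathcal{D}(\sigma)$ is translated by $\sum_{i>N}2^{-i}\sigma_i$, so $\mathcal{D}_N(\sigma)$ depends on every $\sigma_i$ with $i>-N$ and runs through a continuum of configurations. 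Consequently $\Exp_\sigma\pi^N_{b,\sigma}$ is an average of finite-rank operators whose ranges move with $\sigma$. It is not of finite rank, and its compactness must be proved.

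This is easy to supply. For $\norm{f_1}{L^4},\norm{f_2}{L^4}\le1$, $\pi^N_{b,\sigma}(f_1,f_2)$ is a sum of at most $C_N$ terms $c\,g_I$ with $\abs{c}\le C_N$. Here $g_I=\Delta_Ib$ (or $g_I=\abs{I}^{-1}1_I$ for the two adjoint forms), and $I$ ranges over the compact family of translated cubes with $\ell(I)\in\{2^{-N},\dots,2^N\}$ whose closure meets $[-2^N,2^N]^n$. Since $I\mapsto g_I$ is continuous into $L^2$ by continuity of translation, $\{g_I\}$ is compact. Its closed absolutely convex hull $\Gamma_N$ is therefore compact by Mazur's theorem, and $\Exp_\sigma\pi^N_{b,\sigma}(f_1,f_2)\in C_N^2\,\Gamma_N$. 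Hence $\Exp_\sigma\pi^N_{b,\sigma}$ maps the product of unit balls into a relatively compact set. With this addition your proof is complete.
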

Inspired by \cite{AMV2022,LMV2021}, we introduce the \textit{modified compact bilinear dyadic shifts}.
\begin{defn}
    For $k\in\No$, we say that $Q_{k,\sigma}$ is a \textit{modified compact bilinear shift of $k$ complexity}, if it has the form 
    \begin{align*}
    &\ave{Q_{k,\sigma}(f_1,f_2),f_3}=\sum_{K\in\mathcal{D}(\sigma)}\sum_{\substack{I_1,I_2,I_3\in\mathcal{D}(\sigma)\\I_1^{(k)}=I_2^{(k)}=I_3^{(k)}=K}}F(K)a_{I_m,K}\ave{f_3,h_{I_3}}\\
    &\hspace*{7cm}\times\big(\ave{f_1,h_{I_1}^0}\ave{f_2,h_{I_2}^0}-\ave{f_1,h_{I_3}^0}\ave{f_2,h_{I_3}^0}\big)
    \end{align*} 
     for some $F\in\mathcal{F}_0$, or other symmetrical forms (i.e. the role of $f_3$ is replaced by $f_1$ or $f_2$), or
    \begin{align}\label{3.1}
    \ave{Q_{k,\sigma}(f_1,f_2),f_3}=\sum_{K\in\mathcal{D}(\sigma)}\sum_{\substack{I_1,I_2,I_3\in\mathcal{D}(\sigma)\\I_1^{(k)}=I_2^{(k)}=I_3^{(k)}=K}}F(K)a_{I_m,K}\ave{f_1,\tilde{h}_{I_1}}\ave{f_2,\tilde{h}_{I_2}}\ave{f_3,\tilde{h}_{I_3}},
    \end{align} 
    where $\tilde{h}_{I_m}\in\{h_{I_m}^0,h_{I_m}\}, m=1,2,3$, there at least exists two of them are cancellative, and the coefficients $a_{I_m,K}$ satisfy
    \[
    \abs{a_{I_m,K}}\lesssim \frac{\abs{I_1}^{1/2}\abs{I_2}^{1/2}\abs{I_3}^{1/2}}{\abs{K}^2}.
    \]
\end{defn}
Note that the second form (\ref{3.1}) is a special case of Definition \ref{defn3.1}. In contrast to the original definition of the modified shift in \cite{AMV2022,LMV2021}, our definition includes an additional bounded term $F(K)$. Thus 
$$F(K)\cdot \abs{a_{I_m,K}}\lesssim \frac{\abs{I_1}^{1/2}\abs{I_2}^{1/2}\abs{I_3}^{1/2}}{\abs{K}^2},$$ 
and we obtain
\begin{lem}\label{boundedness of shifts}\cite[Proposition 3.37]{AMV2022}
Let $Q_{k,\sigma}$ be a modified compact bilinear shift of $k$ complexity with $1<p_1,p_2\leq\infty$ and $1/p=1/p_1+1/p_2>0$, then
\begin{equation*}
    \norm{\Exp_\sigma Q_{k,\sigma}}{L^{p_1}\times L^{p_2}\to L^p}\lesssim (1+k)^{1/2}.
\end{equation*} 
\end{lem}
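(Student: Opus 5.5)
The lemma is a citation of \cite[Proposition 3.37]{AMV2022}. The only thing to check is that our modified compact shifts fall within the class treated there. I would not reprove the $(1+k)^{1/2}$ bound from scratch. Instead I would reduce to the non-compact modified shifts of \cite{AMV2022,LMV2021} by absorbing the factor $F(K)$ into the coefficients. Since $F\in\mathcal{F}_0$ is bounded, the coefficients $b_{I_m,K}:=F(K)a_{I_m,K}$ satisfy
\[
\abs{b_{I_m,K}}\le \norm{F}{\infty}\abs{a_{I_m,K}}\lesssim \frac{\abs{I_1}^{1/2}\abs{I_2}^{1/2}\abs{I_3}^{1/2}}{\abs{K}^2}.
\]
So, for each fixed $\sigma$, $Q_{k,\sigma}$ is literally a (bilinear) modified shift of complexity $k$ in the sense of \cite{AMV2022}, with normalized coefficients. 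This holds both for the form with the difference $\ave{f_1,h_{I_1}^0}\ave{f_2,h_{I_2}^0}-\ave{f_1,h_{I_3}^0}\ave{f_2,h_{I_3}^0}$ and for the standard form \eqref{3.1}.

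Next I would split by form. For the second form \eqref{3.1}, where at least two Haar functions are cancellative, the operator is a standard bilinear dyadic shift with complexity $(k,k,k)$. Such shifts are bounded uniformly in the complexity in the full range $1<p_1,p_2\le\infty$, $1/p>0$, by the usual argument: $L^2\times L^\infty\to L^2$ or $L^2\times L^2\to L^1$ via orthogonality, then sparse domination or multilinear extrapolation. This gives a bound even better than $(1+k)^{1/2}$.

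For the first form, I would follow \cite{AMV2022,LMV2021}. Telescope the difference along the chain $I_3\subset I_3^{(1)}\subset\cdots\subset K$ and $I_1\subset\cdots\subset K$, writing $\ave{f}_{I}-\ave{f}_K=\sum_{l}\ave{\Delta_{I^{(l)}}f}_{I}$. This turns the form into $k$ pieces, each looking like a shift with one extra martingale difference. The key estimate is then a square-function argument. Using the size bound on $b_{I_m,K}$, the $k$ pieces are not summed with the triangle inequality, which would cost $k$. Instead they are recombined via orthogonality of the martingale differences in the telescoping index, which only costs $k^{1/2}$ after Cauchy--Schwarz. This gives the $L^{2}\times L^{\infty}\to L^2$ (or $L^2\times L^2\to L^1$) bound $\lesssim (1+k)^{1/2}$. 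The main obstacle is exactly this step: getting $k^{1/2}$ rather than $k$ requires the square-function/Stein-inequality argument of \cite{AMV2022}. Here I would simply verify that its hypotheses only involve the coefficient size bound, which we have.

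Finally, I would extend to the full range $1<p_1,p_2\le\infty$, $1/p>0$ with the same $(1+k)^{1/2}$ constant. To do so, I would use the weighted (multilinear $A_{\vec p}$) version of the estimate from \cite{AMV2022} together with multilinear extrapolation. Averaging in $\sigma$ preserves the bound by Minkowski's inequality for $p\ge1$. For $p<1$, I would use the quasi-Banach estimate through extrapolation applied to $\Exp_\sigma$ directly, as is done in \cite{AMV2022}.
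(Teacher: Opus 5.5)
Your proposal is correct and is essentially the paper's argument: since $F\in\mathcal{F}_0$ is bounded, the coefficients $F(K)a_{I_m,K}$ still satisfy the normalized size bound, so $Q_{k,\sigma}$ is a modified shift in the sense of \cite{AMV2022}, and \cite[Proposition 3.37]{AMV2022} applies directly. The rest of your sketch only outlines the cited result and is not needed; if you keep it, note that sparse domination of shifts usually costs a factor $(1+k)$, so uniformity for the standard form should come from square functions and Stein's inequality instead.
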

After making the aforementioned preparations, we are now ready to state our representation theorem.
\begin{thm}\label{representation}
Let $T$ be a bilinear operator associated with a compact bilinear Calder\'on-Zygmund kernel with $\log\frac{1}{2}$-Dini condition and satisfy the weak compactness property, then 
\begin{equation}
    \ave{T(f_1,f_2),f_3}=\Exp_\sigma\left[\sum_{k=0}^\infty\omega(2^{-k})\ave{Q_{k,\sigma}(f_1,f_2),f_3}+\sum_{i=1}^3\ave{\pi_{b_i,\sigma}(f_1,f_2),f_3}\right],
\end{equation}
where $Q_{k,\sigma}$ denotes the modified compact bilinear shift of complexity $k$ and $\pi_{b_i,\sigma}$ is the bilinear paraproduct with $b_i\in\{T(1,1),T^{*1}(1,1),T^{*2}(1,1)\}$, $i=1,2,3$.
\end{thm}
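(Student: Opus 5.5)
We will follow the Hytönen-style random dyadic decomposition, in the form adapted to Dini kernels by Grau de la Herrán--Hytönen and Airta--Martikainen--Vuorinen, and we will carry the compactness function $F$ through each estimate. The goal is to get coefficients of the form $F(K)a_{I_m,K}$ with $F\in\mathcal{F}_0$.

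\emph{Step 1: decomposition.} We expand each $f_j=\sum_{I\in\mathcal{D}(\sigma)}\ave{f_j,h_I}h_I$ and split $\ave{T(f_1,f_2),f_3}$ by which of $I_1,I_2,I_3$ has the smallest side length. By symmetry (using $T^{*1},T^{*2}$) it suffices to treat the part where $\ell(I_3)\le\ell(I_1),\ell(I_2)$. There we telescope the larger Haar functions into averages on $I_3$, as in the standard bilinear argument. This produces two pieces:
\begin{enumerate}
\item the ``diagonal-free'' term $\sum_{I_3}\ave{f_1}_{I_3}\ave{f_2}_{I_3}\ave{T(1,1),h_{I_3}}\ave{f_3,h_{I_3}}$, together with differences $\ave{T(1_{\cdot},1_{\cdot}),h_{I_3}}$ against averages;
\item the genuinely trilinear sums.
\end{enumerate}
The first piece gives the paraproduct $\pi_{T(1,1),\sigma}$. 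Its symmetric counterparts give $T^{*1}(1,1)$ and $T^{*2}(1,1)$. The differences $\ave{f_1,h^0_{I_1}}\ave{f_2,h^0_{I_2}}-\ave{f_1,h^0_{I_3}}\ave{f_2,h^0_{I_3}}$ are exactly what the first form of the modified shift $Q_{k,\sigma}$ is built to absorb. Because this form is used, we never need to re-expand these differences into Haar martingales. That saves the extra calculations of \cite{cao2024}.

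\emph{Step 2: goodness and organisation by complexity.} Next we average over $\sigma$ and use independence of $k$-goodness from position, with $\prob_\sigma(L\in\mathcal{D}(\sigma,k))=2^{-n}$, to insert a good smallest cube. For each pair of sidelength gap and relative separation we pick the complexity $k$. We choose $K=I_3^{(k)}$ so that all $I_m\subset K$, and goodness guarantees $(I_3\dotplus l)^{(k)}=K$ for the relevant $|l|\le 2^{k-2}$. The resulting coefficient is $\omega(2^{-k})^{-1}$ times the kernel pairing. We will show it obeys $|\cdot|\lesssim F(K)\abs{I_1}^{1/2}\abs{I_2}^{1/2}\abs{I_3}^{1/2}/\abs{K}^2$, with $F\in\mathcal{F}_0$ independent of $k$.

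In the separated case we use the cancellation of $h_{I_3}$ and the Dini bound. This gives $\omega(\ell(I_3)/d)F(x,y,z)d^{-2n}$ with $d\sim\ell(K)$, which is at most $\omega(2^{-k})$ times a product of $F_1,F_2,F_3$ evaluated at scale $\ell(K)$ and position $|c_K|$. Since $|x+y|+|x+z|\gtrsim |c_K|$ for points in $K$ once $|c_K|\gg\ell(K)$, this product is dominated by a function $\tilde F(K)\in\mathcal{F}_0$. In the adjacent/nearby cases (complexity $0$ or $1$, neighbours) we use only the size bound. For equal cubes, i.e. $I_1=I_2=I_3$, we use the WCP: the pairing $\ave{T(1_Q,1_Q),1_Q}$ is $\lesssim F(Q)|Q|$, and the neighbouring-cube pairings $\ave{T(1_{Q'},1_{Q''}),1_Q}$ are controlled by size after a standard logarithmic splitting near the common boundary. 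All of these are absorbed into $k=0$ shifts.

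\emph{Step 3: summation.} We collect everything as $\sum_k\omega(2^{-k})Q_{k,\sigma}$. Convergence of the series in operator norm follows from Lemma~\ref{boundedness of shifts}, which gives $\sum_k\omega(2^{-k})(1+k)^{1/2}\lesssim\int_0^1\omega(t)(1+\log\frac1t)^{1/2}\frac{\ud t}{t}<\infty$. This also justifies the interchange of sums and expectation for finite linear combinations of indicators of cubes.

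\emph{Main obstacle.} We expect the hardest part to be producing a single $F\in\mathcal{F}_0$ uniformly in $k$ and in the positions of the $I_m$ inside $K$. In particular, we need the limits $\ell(K)\to0$ (via $F_1$), $\ell(K)\to\infty$ (via $F_2$) and $|c_K|\to\infty$ (via $F_3$) to hold after the kernel is integrated over the cubes. Near the diagonal, the kernel's $F(x,y,z)$ is evaluated at $|x-y|+|x-z|$ much smaller than $\ell(K)$, so $F_1$ does not decay with $\ell(K)$. Our plan is to replace each $F_i$ by a monotone envelope, e.g. $\sup_{s\le t}F_1(s)$ and $\sup_{s\ge t}F_2(s)$, and to let the WCP supply the decay in the diagonal terms. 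The WCP bound together with the size estimates will be combined into one $\mathcal{F}_0$ function, using the maximum of the finitely many envelopes.
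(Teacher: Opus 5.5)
Your overall architecture matches the paper's. You collapse the larger martingale differences to the scale of $I_3$, split off $\ave{f_1}_{I_3}\ave{f_2}_{I_3}$ to produce the paraproducts, and let the first form of $Q_{k,\sigma}$ absorb $\ave{f_1,h^0_{I_1}}\ave{f_2,h^0_{I_2}}-\ave{f_1,h^0_{I_3}}\ave{f_2,h^0_{I_3}}$. You then insert $k$-goodness of $I_3$ and estimate the coefficients in separated, adjacent and identical configurations, the last by splitting into children and applying the WCP to $J_1=J_2=J_3$. The separated case is fine as you describe it. The problem is the step you yourself flag as the main obstacle: the remedy you propose there does not close it.

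First, the diagnosis is off. Near the diagonal the troublesome factor is $F_2$, not $F_1$. In the adjacent case $\abs{x-y}+\abs{x-z}\le\ell(K)$, so an increasing envelope already gives $F_1(\abs{x-y}+\abs{x-z})\le F_1(\ell(K))\to0$ as $\ell(K)\to0$. What can fail is the limit $\ell(K)\to\infty$, because $F_2$ is evaluated at arbitrarily small arguments.

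Second, neither half of your fix reaches the adjacent configurations, e.g. $I_1=I\dotplus e_1$, $I_2=I_3=I$, or two distinct children inside the identical case. The WCP only controls $\ave{T(1_Q,1_Q),1_Q}$ with the same cube in all three slots. The decreasing envelope $\sup_{s\ge t}F_2(s)$ is essentially $\|F_2\|_\infty$ for $t$ near $0$. So "size bound plus envelopes plus WCP" only yields $\abs{\ave{T(h^i_{I_1},h^j_{I_2}),h_{I_3}}}\lesssim\abs{I}^{-1/2}$, with no decay as $\ell(I)\to\infty$. The resulting function is then not in $\mathcal{F}_0$.

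The missing idea is quantitative: the region near the common face carries only a small share of the kernel mass. Suppose the first coordinate of $n_1$ is $1$. Then $F_2(\abs{x-y}+\abs{x-z})\le F_2(y_1-x_1)$. Integrating out $z$ and $y_2,\dots,y_n$ bounds the pairing, up to the harmless factors from $F_1$ and $F_3$, by
\[
\abs{I}^{-3/2}\ell(I)^{n-1}\int_{I^1}\int_{I^1\dotplus1}\frac{F_2(y_1-x_1)}{y_1-x_1}\ud y_1\ud x_1 .
\]
Now split $I^1\dotplus1$ into intervals at distance $\sim2^{-j}\ell(I)$ from the common endpoint. This gives the bound $\abs{I}^{-1/2}\sum_{j\ge1}(1+j)2^{-j}F_2(2^{-j}\ell(I))$, which tends to $0$ as $\ell(I)\to\infty$ by dominated convergence. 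This is the paper's $\tilde F_2$.

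So the "logarithmic splitting near the common boundary" you mention must be run with $F_2$ inside. It is the source of the decay, not merely of a size bound. The same computation handles the non-diagonal children in the identical case.

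A minor bookkeeping point: neighbours have $\max\{\abs{n_1},\abs{n_2}\}=1$, so they belong to complexity $k=2$, where goodness places them inside $K=I^{(2)}$. They cannot be put in complexity $0$ or $1$; in particular, complexity $0$ forces $I_1=I_2=I_3$. Since $\omega(2^{-2})$ is a fixed constant, this costs nothing.
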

\begin{proof}
    Applying the martingale difference decomposition to each function $f_j$ gives
\begin{align*}
     \bave{T(f_1,f_2),f_3}&=\Exp_\sigma\sum_{I_1,I_2,I_3\in\mathcal{D}(\sigma)} \bave{T(\Delta_{I_1}f_1,\Delta_{I_2}f_2),\Delta_{I_3}f_3}\\
    &=\Exp_\sigma\Big(\sum_{\ell(I_1),\ell(I_2)>\ell(I_3)}+\sum_{\ell(I_2),\ell(I_3)>\ell(I_1)}+\sum_{\ell(I_3),\ell(I_1)>\ell(I_2)}+\sum_{\ell(I_1)>\ell(I_2)=\ell(I_3)}\\
    &\hspace*{2cm}+\sum_{\ell(I_2)>\ell(I_3)=\ell(I_1)}+\sum_{\ell(I_3)>\ell(I_1)=\ell(I_2)}+\sum_{\ell(I_1)=\ell(I_2)=\ell(I_3)}\Big)\\
    &=:\Sigma_1+\Sigma_2+\cdots +\Sigma_7.
\end{align*}
Take $\Sigma_1$ as an example,
\begin{align*}
    \Sigma_1&=\Exp_\sigma\sum_{\ell(I_1),\ell(I_2)>\ell(I_3)}\bave{T(\Delta_{I_1}f_1,\Delta_{I_2}f_2),\Delta_{I_3}f_3}\\
    &=\Exp_\sigma\sum_{\ell(I_1)=\ell(I_2)=\ell(I_3)}\bave{T(E_{I_1}f_1,E_{I_2}f_2),\Delta_{I_3}f_3}\\
    &=\Exp_\sigma\sum_{\ell(I_1)=\ell(I_2)=\ell(I_3)}\bave{T(h_{I_1}^0,h_{I_2}^0),\Delta_{I_3}f_3}\Big(\ave{f_1,h_{I_1}^0}\ave{f_2,h_{I_2}^0}-\ave{f_1,h_{I_3}^0}\ave{f_2,h_{I_3}^0}\Big)\\
    &\hspace*{4.5cm}+\Exp_\sigma\sum_{\ell(I_1)=\ell(I_2)=\ell(I_3)}\bave{T(h_{I_1}^0,h_{I_2}^0),\Delta_{I_3}f_3}\ave{f_1,h_{I_3}^0}\ave{f_2,h_{I_3}^0},
  \end{align*}
where the last sum is already a bilinear paraproduct. To handle the first term, set $I_3=I,I_1=I\dotplus n_1,I_2=I\dotplus n_2$. Then it can be written as 
 \begin{equation*}
    \sum_{(n_1,n_2)\in\Z^n\times\Z^n\backslash\{(0,0)\}}\sum_{\substack{I\in\mathcal{D}(\sigma)}}=\sum_{k=2}^\infty\sum_{\substack{(n_1,n_2)\in\Z^n\times\Z^n\backslash\{(0,0)\}\\\max\{\abs{n_1},\abs{n_2}\}\in(2^{k-3},2^{k-2}]}}\sum_{\substack{I\in\mathcal{D}(\sigma)}}.
  \end{equation*}
  Invoking the goodness yields 
  \begin{align*}
    &\Exp_\sigma\sum_{k=2}^\infty\sum_{\substack{(n_1,n_2)\in\Z^n\times\Z^n\backslash\{(0,0)\}\\\max\{\abs{n_1},\abs{n_2}\}\in(2^{k-3},2^{k-2}]}}\sum_{\substack{I\in\mathcal{D}(\sigma)}}\\
    &\hspace*{1cm}=2^n\Exp_\sigma\sum_{k=2}^\infty\sum_{\substack{(n_1,n_2)\in\Z^n\times\Z^n\backslash\{(0,0)\}\\\max\{\abs{n_1},\abs{n_2}\}\in(2^{k-3},2^{k-2}]}}\sum_{I\in\mathcal{D}(\sigma,k)}=2^n\Exp_\sigma\sum_{k=2}^\infty\sum_{K\in\mathcal{D}(\sigma)}\sum_{\substack{I_1,I_2,I_3\in\mathcal{D}(\sigma)\\I_1^{(k)}=I_2^{(k)}=I_3^{(k)}=K}}\gamma(I_3),
  \end{align*}
  where $\gamma(I_3)=1$ if $I_3$ is $k$-good and otherwise $\gamma(I_3)=0$.
  The remaining $\Sigma_i$ can be dealt with similarly. From
  $$\abs{\gamma(I_3)\ave{T(h_{I_1}^i,h_{I_2}^j),h_{I_3}^l}}\leq\abs{\ave{T(h_{I_1}^i,h_{I_2}^j),h_{I_3}^l}},$$
it remains to estimate the coefficient $\abs{\ave{T(h_{I_1}^i,h_{I_2}^j),h_{I_3}^l}}$, where at least one Haar function is cancellative. We summarize this part of the proof in the following lemma.
  \end{proof}
\begin{lem}\label{coei}
    Let $T$ be a bilinear operator associated with a compact bilinear Calder\'on-Zygmund kernel satisfying $\log\frac{1}{2}$-Dini condition and satisfy the weak compactness property. Let $n_1,n_2\in\Z^n$. If $n_1=n_2=0$, we set $k=0$. Otherwise, we let $k\geq2$ be the unique integer such that 
    \[
    \max\{\abs{n_1},\abs{n_2}\}\in(2^{k-3},2^{k-2}].
    \]
    Then for any cube $I\in\mathcal{D}(\sigma,k)$, with $K=I^{(k)}$ and with at least one of $\{h_I^l,h_{I\dotplus n_1}^i,h_{I\dotplus n_2}^j\}$ being cancellative, we have 
    \begin{equation*}
        \abs{\ave{T(h_{I\dotplus n_1}^i,h_{I\dotplus n_2}^j),h_{I}^l}}\lesssim\omega(2^{-k})F(K)\frac{\abs{I}^{1/2}\abs{I\dotplus n_1}^{1/2}\abs{I\dotplus n_2}^{1/2}}{\abs{K}^2}
    \end{equation*}
    for some $F\in\mathcal{F}_0.$ 
\end{lem}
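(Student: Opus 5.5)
We split according to the relative position of the three cubes. All three cubes $I_1=I\dotplus n_1$, $I_2=I\dotplus n_2$, $I_3=I$ have the same side length $\ell(I)$, and since $I$ is $k$-good they all lie in $K=I^{(k)}$, with $\ell(K)=2^k\ell(I)$. The cases are: (a) $k=0$, i.e. all cubes coincide; (b) $k\ge2$ and the cubes are not all mutually adjacent, so that $\max\{|n_1|,|n_2|,|n_1-n_2|\}\ge 2$; (c) $k\ge 2$ but small, with adjacent cubes. Case (c) has only finitely many $k$ (say $k\le 3$). In both (a) and (c) the factor $\omega(2^{-k})$ is a constant and $|I|^{3/2}/|K|^2\sim |I|^{-1/2}$, so there we only need the bound $F(K)|I|^{-1/2}$ with $F\in\mathcal F_0$. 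Because $\ell(K)\sim\ell(I)$ and $c_K$ is comparable to $c_I$, we may freely pass between $F(I)$ and $F(K)$, after replacing $F$ by a slightly enlarged function that is still in $\mathcal F_0$.

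\textbf{Case (a).} Expand the Haar functions as $|I|^{-1/2}$ times combinations of $\pm 1_{I'}$ over children $I'\in\mathrm{ch}(I)$. This reduces the estimate to pairings $\langle T(1_{I'},1_{I''}),1_{I'''}\rangle$ with $I',I'',I'''$ children of $I$. When all three children coincide, the WCP gives $F(I')|I'|$. When two are distinct, the children are disjoint, so the kernel representation applies. We bound the integral using the size estimate and the standard adjacent-cube computation: the integral $\int\!\int\!\int (|x-y|+|x-z|)^{-2n}$ over adjacent cubes diverges only logarithmically. To avoid this divergence, we split further with a Whitney-type decomposition near the common boundary, as in Villarroya/Cao et al. Equivalently, we bound the kernel by $\tilde F(I)\,(|x-y|+|x-z|)^{-2n}$, where $\tilde F(I)=\sup F$ over the relevant range. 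Here $F_1(t)$ is small only for small $t$, which is the delicate point. The standard trick is to split the near-diagonal region $|x-y|+|x-z|<\delta\ell(I)$, use the WCP-type bound there, and use smallness of $F_2,F_3$ elsewhere. Putting these together gives a bound $F(I)|I|$, with $F$ built from $\sup_{t\le \delta\ell}F_1$, $F_2(\ell)$, $F_3(|c_I|)$ and the WCP function. Multiplying by the $|I|^{-3/2}$ normalization gives the claim. Case (c) is handled identically, since at least two of the cubes are disjoint or adjacent there.

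\textbf{Case (b), the main case.} Suppose, say, $I_3=I$ is separated from $I_1$ or $I_2$ by distance $\gtrsim 2^{k}\ell(I)$, so that $|x-y|+|x-z|\sim 2^k\ell(I)=\ell(K)$ on the supports. Choose the cancellative Haar function: if $h_I^l$ is cancellative, subtract $K(c_I,y,z)$ and use the $x$-regularity. This yields
\[
\int|h_I||h_{I_1}||h_{I_2}|\,\omega\Big(\frac{\ell(I)}{\ell(K)}\Big)\frac{F(x,y,z)}{\ell(K)^{2n}}\lesssim \omega(2^{-k})\,F(K)\frac{|I|^{3/2}}{|K|^{2}}.
\]
Here we use $F(x,y,z)\lesssim F_1(\ell(K))F_2(\ell(K))F_3(|c_K|)$, up to harmless adjustments of the arguments by constant multiples, and we absorb this into an $\mathcal F_0$ function. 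If instead the cancellative Haar function sits on $I_1$ or $I_2$, we use the symmetric regularity conditions in $y$ or $z$ in the same way. The main obstacle is ensuring separation: when $\max|n_i|\sim 2^{k-3}$, one of $I_1,I_2$ may lie adjacent to $I$ while the other is far away. Then the kernel near the adjacent pair is not controlled by $\ell(K)$, and we must check that the cancellative Haar function sits on the far cube or that $|x-y|+|x-z|\ge\max$ still gives the decay.

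For this subcase the plan is as follows. Because $|x-y|+|x-z|\ge \max(|x-y|,|x-z|)\gtrsim 2^k\ell(I)$ whenever one of the cubes is far, the size factor $\ell(K)^{-2n}$ persists. The regularity hypothesis requires only $|x-x'|\le\max\{|x-y|,|x-z|\}/2$, and this holds because $\max$ is large. So cancellation in any variable still yields $\omega(\ell(I)/\ell(K))$, with the roles of the three variables permuted by the symmetric conditions. This completes the estimate, with $F$ the maximum of the finitely many $\mathcal F_0$ functions produced in the cases.
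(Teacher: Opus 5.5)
Your separated case (b) matches the paper's argument. When $m=\max\{\abs{n_1},\abs{n_2}\}\ge 2$ one has $\abs{x-y}+\abs{x-z}\in[\ell(K)/8,\ell(K)]$. You subtract the kernel at the centre of the cube carrying the cancellative Haar function, and the monotonized $F_1,F_2,F_3$ give an $\mathcal{F}_0$ function of $K$. Your case (a) also has the right skeleton: split into children of $I$ and use WCP when all three children coincide. The gap is in the adjacent configurations, namely your case (c) and the non-coinciding children in case (a). There the plan as written would not go through.

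Three things go wrong there.

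First, for adjacent but non-identical cubes the integral $\int_I\int_J\int_{J'}(\abs{x-y}+\abs{x-z})^{-2n}$ does not diverge. Integrating in $z$ gives $\lesssim\abs{x-y}^{-n}$. Integrating the other $n-1$ coordinates of $y$ then gives $\lesssim(y_1-x_1)^{-1}$. Finally $\int_{I^1}\int_{I^1\dotplus 1}(y_1-x_1)^{-1}\ud y_1\ud x_1\lesssim\ell(I)$, so the size estimate alone gives $\lesssim\abs{I}$. The logarithmic divergence occurs only when all three cubes coincide, which is exactly what WCP is for.

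Second, your remedy of applying a ``WCP-type bound'' on the region $\abs{x-y}+\abs{x-z}<\delta\ell(I)$ is not available. WCP controls only $\ave{T(1_Q,1_Q),1_Q}$ with the same cube in every slot. It cannot be localized to a piece of $I'\times I''\times I'''$ with distinct children.

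Third, you have $F_1$ and $F_2$ reversed. Every distance is at most $\ell(K)$, so after monotonization $F_1(\abs{x-y}+\abs{x-z})\le F_1(\ell(K))$, which is harmless. The delicate factor is $F_2$, which is not small near the common face, so $F_2(\ell)$ is not an upper bound there.

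The missing step is the paper's. Bound $F_2(\abs{x-y}+\abs{x-z})\le F_2(\abs{x_1-y_1})$, and observe that the part of the convergent integral where $y_1-x_1\sim 2^{-j}\ell(I)$ is $\lesssim 2^{-j}\abs{I}$. This yields the bound $\abs{I}\,\tilde F_2(\ell(I))$ with $\tilde F_2(t)=\sum_{j\ge0}2^{-j}F_2(2^{-j}t)$. By dominated convergence $\tilde F_2(t)\to0$ as $t\to\infty$, so $F_1(\ell(K))\tilde F_2(\ell(K)/4)F_3(M(K))\in\mathcal{F}_0$.

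Your $\delta$-splitting can be rescued: bound the near region by $\sup F$ times the small tail of the convergent integral, and let $\delta=\delta(\ell)\to0$ with $\delta\ell\to\infty$. But that relies on exactly the convergence you denied.
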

\begin{proof}
    By duality, we may assume $l\neq0$. We split the proof into three cases according to the relative positions of $I,I\dotplus n_1, I\dotplus n_2$.

    {\bf Separated}, i.e., $m:=\max\{\abs{n_1},\abs{n_2}\}>1$. Then $m\in(2^{k-3},2^{k-2}]$ for some $k\geq3$, and we have 
    \[
    \frac{1}{8}\ell(K)=2^{k-3}\ell(I)\leq(m-1)\ell(I)\leq\abs{x-y}+\abs{x-z}\leq 2(m+1)\ell(I)\leq 2(2^{k-2}+1)\ell(I)\leq\ell(K),
    \]
    for all $x\in I,y\in I\dotplus n_1,z\in I\dotplus n_2$. Consequently, we can write 
    \begin{align*}
        &\abs{\ave{T(h_{I\dotplus n_1}^i,h_{I\dotplus n_2}^j),h_{I}}}\\
        &\hspace*{1cm}\leq\int_{[I\dotplus n_1]\times [I\dotplus n_2]\times I}\abs{K(x,y,z)-K(c_I,y,z)}\frac{\ud y\ud z\ud x}{\abs{I\dotplus n_1}^{1/2}\abs{I\dotplus n_2}^{1/2}\abs{I}^{1/2}}\\
        &\hspace*{1cm}\lesssim \frac{\omega(2^{-k})}{\abs{I\dotplus n_1}^{1/2}\abs{I\dotplus n_2}^{1/2}\abs{I}^{1/2}}\int_{[I\dotplus n_1]\times [I\dotplus n_2]\times I}\frac{F(x,y,z)}{(\abs{x-y}+\abs{x-z})^{2n}}\ud y\ud z\ud x.
    \end{align*}
    From \cite[Lemma 2.30 (i)]{cao2024}, we have
    \[
    F(x,y,z)\leq F_1(\abs{x-y}+\abs{x-z})F_2(\abs{x-y}+\abs{x-z})F_3\left(1+\frac{\abs{x+y}+\abs{x+z}}{1+\abs{x-y}+\abs{x-z}}\right),
    \]
    where $F_1$ is monotone increasing, $F_2,F_3$ are monotone decreasing and $(F_1,F_2,F_3)\in\mathcal{F}$. Set
    \begin{equation*}
        M(K):=\inf_{x,y,z\in K}1+\frac{\abs{x+y}+\abs{x+z}}{1+\abs{x-y}+\abs{x-z}},
    \end{equation*}
    then we obtain
\begin{align*}
        &\abs{\ave{T(h_{I\dotplus n_1}^i,h_{I\dotplus n_2}^j),h_{I}^l}}\\
        &\hspace*{1cm}\lesssim\omega(2^{-k})F_1(\ell(K))F_2(\ell(K)/8)F_3(M(K))\frac{\abs{I}^{1/2}\abs{I\dotplus n_1}^{1/2}\abs{I\dotplus n_2}^{1/2}}{\abs{K}^2}.
    \end{align*}
    Let $F_{\text{sep}}(K):=F_1(\ell(K))F_2(\ell(K)/8)F_3(M(K))$, we have 
    \begin{align*}
    \lim_{\ell(K)\to 0}F_{\text{sep}}(K)&\lesssim\lim_{\ell(K)\to 0}F_1(\ell(K))=0;\\
    \lim_{\ell(K)\to \infty}F_{\text{sep}}(K)&\lesssim\lim_{\ell(K)\to \infty}F_2(\ell(K)/8)=0.
    \end{align*}
    When $\ell(K)\sim 1$ and $\abs{c_K}$ is sufficiently large,
    \begin{align*}
        M(K)=\inf_{x,y,z\in K}\frac{1+\abs{x-y}+\abs{x-z}+\abs{x+y}+\abs{x+z}}{1+\abs{x-y}+\abs{x-z}} \geq C\inf_{x,y,z\in K}\frac{1+4\abs{x}}{1+\ell(K)}\geq C\abs{c_K},
    \end{align*}
    Then
    \begin{align*}
    \lim_{\abs{c_K}\to \infty}F_{\text{sep}}(K)&\lesssim\lim_{\ell(K)\to 0}F_1(\ell(K))+\lim_{\ell(K)\to \infty}F_2(\ell(K)/8)+\lim_{\substack{\ell(K)\sim 1\\\abs{c_K}\to\infty}}F_3(M(K))\\
    &\lesssim\lim_{\abs{c_K}\to\infty}F_3(C\abs{c_K})=0,
    \end{align*}
    therefore $F_{\text{sep}}(K)$ belongs to $\mathcal{F}_0$ by definition.

    {\bf Adjacent}, i.e., $m:=\max\{\abs{n_1},\abs{n_2}\}=1$. Without loss of generality, suppose that the first component of $n_1$ is $1$. In this case we have 
    \[\abs{x-y}+\abs{x-z}\leq 4\ell(I)=\ell(K)\text{ for all }x\in I,y\in I\dotplus n_1,z\in I\dotplus n_2.
    \]
    Then
    \begin{align*}
         &\abs{\ave{T(h_{I\dotplus n_1}^i,h_{I\dotplus n_2}^j),h_{I}}}\\
        &\hspace*{1cm}\leq\int_{[I\dotplus n_1]\times [I\dotplus n_2]\times I}\abs{K(x,y,z)}\frac{\ud y\ud z\ud x}{\abs{I\dotplus n_1}^{1/2}\abs{I\dotplus n_2}^{1/2}\abs{I}^{1/2}}\\
        &\hspace*{1cm}\lesssim\frac{1}{\abs{I\dotplus n_1}^{1/2}\abs{I\dotplus n_2}^{1/2}\abs{I}^{1/2}}\int_{[I\dotplus n_1]\times [I\dotplus n_2]\times I}\frac{F(x,y,z)}{(\abs{x-y}+\abs{x-z})^{2n}}\ud y\ud z\ud x.
        \end{align*}
The same arguments as in the separated case imply that 
\[
F(x,y,z)\leq F_1(\ell(K))F_2(\abs{x-y}+\abs{x-z})F_3(M(K)).
\]
For $x=(x_1,\dots,x_n),y=(y_1,\dots,y_n)$ and $I=I^1\times\cdots\times I^n$, we write
\begin{align*}
        &\int_{[I\dotplus n_1]\times [I\dotplus n_2]\times I}\frac{F_2(\abs{x-y}+\abs{x-z})}{(\abs{x-y}+\abs{x-z})^{2n}}\ud y\ud z\ud x\\
&\hspace*{1cm}\lesssim\int_{[I\dotplus n_1]\times [I\dotplus n_2]\times I}\frac{F_2(x_1-y_1)}{(\abs{x-y}+\abs{x-z})^{2n}}\ud y\ud z\ud x\lesssim\int_{I}\int_{I^1\dotplus 1}\frac{F_2(x_1-y_1)}{y_1-x_1}\ud y_1\ud x.
    \end{align*}
Split $I^1+1= \cup_{k=1}^\infty I_k$ such that $\dist(I^1,I_k)=2^{-k}\ell(I)$, then
\begin{align*}
&\int_{I}\int_{I^1\dotplus 1}\frac{F_2(x_1-y_1)}{y_1-x_1}\ud y_1\ud x\\
&\hspace*{2cm}\lesssim\ell(I)^{n-1}\sum_{k=1}^\infty\int_{I^1}\int_{I_k}\frac{F_2(x_1-y_1)}{y_1-x_1}\ud y_1\ud x_1\lesssim\abs{I}\sum_{k=1}^\infty 2^{-k}F_2(2^{-k}\ell(I)).
\end{align*}
Let $\tilde{F}_2(t):=\sum_{k\geq1}2^{-k}F_2(2^{-k}t)$, it is still non-negative, bounded, decreasing, and satisfies $\lim_{t\to\infty}\tilde{F}_2(t)=0$. Therefore we obtain 
\begin{align*}
    &\abs{\ave{T(h_{I\dotplus n_1}^i,h_{I\dotplus n_2}^j),h_{I}^l}}\lesssim F_1(\ell(K))\tilde{F}_2(\ell(K)/4)F_3(M(K))\frac{\abs{I}^{1/2}\abs{I\dotplus n_1}^{1/2}\abs{I\dotplus n_2}^{1/2}}{\abs{K}^2},
\end{align*}
with $F_{\text{ad}}(K):=F_1(\ell(K))\tilde{F}_2(\ell(K)/4)F_3(M(K))\in\mathcal{F}_0$.

{\bf Identical}, i.e., $\abs{n_1}=\abs{n_2}=0.$ Since
\begin{align*}
    \ave{T(h_{I}^i,h_{I}^j),h_{I}}=\sum_{J_1,J_2,J_3\in\mathrm{ch}(I)}\ave{T(h_{I}^i1_{J_1},h_{I}^j1_{J_2}),h_{I}1_{J_3}},
\end{align*}
the situation reduces to the adjacent case unless $J_1=J_2=J_3$. The weak compactness property then implies 
\begin{align*}
    \babs{\sum_{J\in\mathrm{ch}(I)}\ave{T(h_{I}^i1_{J},h_{I}^j1_{J}),h_{I}1_{J}}}\lesssim \sum_{J\in\mathrm{ch}(I)}F(J)\abs{J}^{-1/2}\lesssim\max_{J\in\mathrm{ch}(I)}F(J)\abs{I}^{-1/2}.
\end{align*}
Therefore, given $F\in\mathcal{F}_0$, we define a new function $F_{\text{id}}:\mathcal{Q}\to [0,\infty)$ by
\begin{equation*}
   F_{\text{id}}(I):=\max_{J\in\mathrm{ch}(I)}F(J).
\end{equation*}
Clearly, $F_{\text{id}}$ also belongs to $\mathcal{F}_0$. Hence the desired result follows by taking $F$ to be the maximum of $\{F_{\text{sep}},F_{\text{ad}},F_{\text{id}}\}$.
\end{proof}
\section{Proof of main theorem}
\subsection{(2)$\Rightarrow$(1) in Theorem \ref{main thm}}
Note that a compact bilinear Calder\'on-Zygmund kernel is in particular a bilinear Calder\'on-Zygmund kernel. Hence it follows immediately that $T$ is bounded on weighted Lebesgue spaces for the full range of exponents; see e.g., via sparse domination in \cite{LO2020}.
Then by Theorem \ref{extrapolation of compactness}, Theorem \ref{representation} and Lemma \ref{compactness of paraproducts}, it suffices to verify that for some $1<p_1,p_2\leq\infty$ with $1/p=1/p_1+1/p_2>0$,
\begin{equation*}
\sum_{k=0}^\infty\omega(2^{-k})\Exp_\sigma Q_{k,\sigma}\text{ is compact from }L^{p_1}\times L^{p_2}\to L^p.
\end{equation*}
\begin{lem}\label{compactness criteria}\cite[Lemma 4.51]{cao2024}
    Let $1/p=1/p_1+1/p_2$ with $p,p_1,p_2\in(1,\infty]$ and $\{\alpha_j\}_{j\geq0}$ be a sequence of positive numbers satisfying $\sum_{j\geq0}\alpha_j<\infty$. Assume that a bilinear operator $T$ satisfies the following conditions
    \begin{enumerate}
        \item $T=\sum_{j\geq0}\alpha_jT_j$, where each $T_j$ is a bilinear operator,
        \item $\sup_{j\geq0}\norm{T_j}{L^{p_1}\times L^{p_2}\to L^p}\leq C_0<\infty$,
        \item For every $j\geq0$, $T_j$ is compact from $L^{p_1}(\R^n)\times L^{p_2}(\R^n)\to L^p(\R^n)$.
    \end{enumerate}
    Then $T$ is compact from $L^{p_1}(\R^n)\times L^{p_2}(\R^n)\to L^p(\R^n)$.
\end{lem}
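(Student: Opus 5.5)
The plan is to approximate $T$ by finite partial sums and pass to the limit using compactness of the partial sums and uniform convergence in operator norm. Set $S_N:=\sum_{j=0}^N\alpha_jT_j$. Each $S_N$ is a finite linear combination of compact bilinear operators from $L^{p_1}\times L^{p_2}$ to $L^p$, hence compact. Here compactness of a bilinear operator means that the image of the product of unit balls is relatively compact, and this property is clearly preserved under finite sums and scalar multiples: a sum of two totally bounded sets is totally bounded. Note that $p>1$, so $L^p$ is a Banach space and the triangle inequality holds with constant one; this is where the hypothesis $p\in(1,\infty]$ is used.

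Next, by assumption (2), for $\norm{f_1}{L^{p_1}}\le1$ and $\norm{f_2}{L^{p_2}}\le1$,
\[
\norm{T(f_1,f_2)-S_N(f_1,f_2)}{L^p}\le\sum_{j>N}\alpha_j\norm{T_j(f_1,f_2)}{L^p}\le C_0\sum_{j>N}\alpha_j\to0 .
\]
So $S_N\to T$ in bilinear operator norm. Here I interpret (1) as saying that the series converges in $L^p$ for such inputs; absolute convergence follows from $\sum\alpha_j<\infty$ and the uniform bound in (2), using completeness of $L^p$.

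Finally, I conclude with the standard $\varepsilon/3$ total boundedness argument. Given $\varepsilon>0$, choose $N$ with $\norm{T-S_N}{}<\varepsilon/2$. Cover the image of the product of unit balls under $S_N$ by finitely many balls $B(g_k,\varepsilon/2)$. Then the balls $B(g_k,\varepsilon)$ cover the image under $T$. Hence that image is totally bounded in the complete space $L^p$, and therefore relatively compact.

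There is no real obstacle here. The only points needing care are (i) that finite sums of compact bilinear operators are compact, and (ii) the quasi-norm issue, which is avoided since $p>1$. If one also wanted $p\le1$, the same argument would go through with the quasi-triangle constant, using $p$-convexity $\norm{\sum g_j}{L^p}^p\le\sum\norm{g_j}{L^p}^p$ together with summability of $\alpha_j^p$. That case is not needed here.
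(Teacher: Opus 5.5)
Your proof is correct. The partial sums $S_N=\sum_{j\le N}\alpha_jT_j$ are compact, the tail bound $C_0\sum_{j>N}\alpha_j$ gives $S_N\to T$ in bilinear operator norm, and total boundedness in the complete space $L^p$ passes to the limit. This is the standard argument behind \cite[Lemma 4.51]{cao2024}, which the paper quotes without proof, so your route is essentially the intended one.
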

The $\log\frac{1}{2}$-Dini condition on $\omega$ implies $\sum_k\omega(2^{-k})(1+k)^{1/2}<\infty$. Therefore, if we can prove the uniform boundedness and compactness of $(1+k)^{-1/2}\Exp_\sigma Q_{k,\sigma}$, then Theorem \ref{main thm} follows immediately by applying Lemma \ref{compactness criteria} with $\alpha_k=\omega(2^{-k})(1+k)^{1/2}$ and $T_k=(1+k)^{-1/2}\Exp_\sigma Q_{k,\sigma}$. 

The uniform boundedness of these operators has already been established in Lemma \ref{boundedness of shifts}. Hence, it remains to show that each modified shift is compact. Since the compactness of the standard shifts was obtained in Lemma \ref{compactness of shifts}, if we can decompose each modified compact bilinear shift as a suitable finite sum of standard ones, then the desired result follows. 

Such a decomposition has already appeared in the proof of boundedness estimates (see e.g., \cite{LMV2021}) and inevitably involves the need to replace the principal cube. However, due to the presence of an additional factor $F$ here, this operation causes some trouble when we turn to the compact analogue.
Note that the modified compact bilinear shifts are either symmetric or already in the standard forms (\ref{3.1}), so it suffices to prove the following lemma.
\begin{lem}
    Given $k\in\No$, let $Q_{k,\sigma}$ be a modified  compact bilinear shift of k complexity with the form 
    \begin{align*}
    &\ave{Q_{k,\sigma}(f_1,f_2),f_3}=\sum_{K\in\mathcal{D}(\sigma)}\sum_{\substack{I_1,I_2,I_3\in\mathcal{D}(\sigma)\\I_1^{(k)}=I_2^{(k)}=I_3^{(k)}=K}}F(K)a_{I_m,K}\ave{f_3,h_{I_3}}\\
    &\hspace*{5cm}\times\Big(\ave{f_1,h_{I_1}^0}\ave{f_2,h_{I_2}^0}-\ave{f_1,h_{I_3}^0}\ave{f_2,h_{I_3}^0}\Big).
    \end{align*}
    Then we have
    \begin{equation*}
        Q_{k,\sigma}=\sum_{i=1}^k \Big(S_{\mathcal{D(\sigma)}}^{k,k-i,k}+S_{\mathcal{D(\sigma)}}^{k-i,0,k}-S_{\mathcal{D(\sigma)}}^{1,0,i}-S_{\mathcal{D(\sigma)}}^{0,0,i}\Big).
    \end{equation*}
\end{lem}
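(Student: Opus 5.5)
The plan is the standard telescoping used for modified shifts in the boundedness theory (as in \cite{LMV2021}). The aim is to split the difference $\ave{f_1,h_{I_1}^0}\ave{f_2,h_{I_2}^0}-\ave{f_1,h_{I_3}^0}\ave{f_2,h_{I_3}^0}$ into pieces, each carrying a martingale difference $\Delta_{L}$ of $f_1$ or $f_2$ at some intermediate scale. Every piece is then a standard compact shift in the sense of Definition \ref{defn3.1}. First, normalise with averages: $\ave{f,h_I^0}=\abs{I}^{1/2}\ave{f}_I$ (up to the paper's normalisation convention), so the bracket equals $\abs{I}\big(\ave{f_1}_{I_1}\ave{f_2}_{I_2}-\ave{f_1}_{I_3}\ave{f_2}_{I_3}\big)$, with all three cubes of side $2^{-k}\ell(K)$. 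I then write
\[
\ave{f_1}_{I_1}\ave{f_2}_{I_2}-\ave{f_1}_{I_3}\ave{f_2}_{I_3}=\big(\ave{f_1}_{I_1}-\ave{f_1}_K\big)\ave{f_2}_{I_2}+\ave{f_1}_K\big(\ave{f_2}_{I_2}-\ave{f_2}_K\big)-\big(\ave{f_1}_{I_3}\ave{f_2}_{I_3}-\ave{f_1}_K\ave{f_2}_K\big).
\]
Each difference $\ave{f}_{I}-\ave{f}_K$ telescopes as $\sum_{i=1}^{k}(\ave{f}_{I^{(i-1)}}-\ave{f}_{I^{(i)}})$. Each term is $\sum_\eta\ave{f,h_{I^{(i)}}^\eta}h^\eta_{I^{(i)}}(\text{restricted to }I^{(i-1)})$, i.e.\ a Haar coefficient of $f$ on the cube $I^{(i)}$ times a bounded factor of size $\abs{I^{(i)}}^{-1/2}$.

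Second, the three groups are matched to the four families in the claimed identity.

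From the first group we get, for each $i$, terms with a cancellative Haar function of $f_1$ on a cube of generation $k-i$ below $K$, a noncancellative $h^0_{I_2}$ for $f_2$, and $h_{I_3}$ for $f_3$. After summing out the free index $I_1$ inside the ancestor, these should give $S^{k-i,\cdot,k}$ type terms. Similarly, the second group gives a cancellative coefficient of $f_2$ together with an average of $f_1$ over $K$, i.e.\ $h^0_K$ with index $0$. Matching the indices exactly as in the statement ($S^{k,k-i,k}$ and $S^{k-i,0,k}$) is bookkeeping: which function's cube sits at which generation below $K$, with $I_1$-averaging absorbed into the coefficients.

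The third group involves only $I_3$ and its ancestors. The product difference $\ave{f_1}_{I_3}\ave{f_2}_{I_3}-\ave{f_1}_K\ave{f_2}_K$ telescopes along the chain $I_3^{(i)}$. Each step gives $\ave{f_1}_{I^{(i-1)}}\ave{f_2}_{I^{(i-1)}}-\ave{f_1}_{I^{(i)}}\ave{f_2}_{I^{(i)}}$, which splits as a $\Delta f_1$ term times the child average of $f_2$ plus $\ave{f_1}_{I^{(i)}}$ times a $\Delta f_2$ term. Re-indexing with principal cube $I_3^{(i)}$ instead of $K$ yields shifts with complexities $(1,0,i)$ and $(0,0,i)$, since $f_3$'s cube lies $i$ levels down. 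These are the subtracted families.

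Third, I must check the coefficient size condition. For each new shift, the new coefficient is a sum over the forgotten free cubes of $F(K)a_{I_m,K}$ times normalising factors. The bound $\abs{a}\lesssim\prod\abs{I_m}^{1/2}/\abs{K}^2$ and the count of the summed cubes should give exactly $\prod\abs{J_m}^{1/2}/\abs{P}^2$ for the new principal cube $P$. At least two Haar functions are cancellative in each piece: $h_{I_3}$ always, plus the new $\Delta$ factor.

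The main obstacle is the $F$-factor when the principal cube changes from $K$ to $P=I_3^{(i)}$. The definition requires a factor $G(P)$ with $G\in\mathcal{F}_0$, but we have $F(K)=F(P^{(k-i)})$. I would define
\[
G(P):=\max_{0\le j\le k}F(P^{(j)}),
\]
and absorb the ratio $F(P^{(k-i)})/G(P)\le1$ into the coefficient. Since $k$ is fixed, $G\in\mathcal{F}_0$: $\ell(P^{(j)})$ is comparable to $\ell(P)$ up to $2^k$, and $\abs{c_{P^{(j)}}}\to\infty$ when $\abs{c_P}\to\infty$ for $\ell(P)$ bounded. The dependence of $G$ on $k$ is harmless for compactness of each individual term, which is all Lemma \ref{compactness criteria} needs; uniform boundedness comes from Lemma \ref{boundedness of shifts}.
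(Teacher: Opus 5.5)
Your proposal follows the paper's proof: normalise by $\abs{I_3}$ and telescope the averages through the ancestors. You keep $K$ as the top cube for the $I_1$- and $I_2$-chain terms, re-index the $I_3$-chain terms with the new top cube $L=I_3^{(i)}$, and carry the factor $F$ through the ancestor map.

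One correction concerns the indices. With the order in which you split, the types do not come out as stated. Your terms $(\ave{f_1}_{I_1}-\ave{f_1}_K)\ave{f_2}_{I_2}$ and $\ave{f_1}_K(\ave{f_2}_{I_2}-\ave{f_2}_K)$ produce $S^{k-i,k,k}_{\mathcal{D}(\sigma)}$ and $S^{0,k-i,k}_{\mathcal{D}(\sigma)}$. Your split of each $I_3$-chain step ($\Delta f_1$ times the child average of $f_2$) produces $S^{0,1,i}_{\mathcal{D}(\sigma)}$ rather than $S^{1,0,i}_{\mathcal{D}(\sigma)}$. These are the mirror images under $f_1\leftrightarrow f_2$, and no bookkeeping turns them into the stated types.

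To get exactly the stated identity, reverse the splits as the paper does:
\[
\ave{f_1}_{I_1}(\ave{f_2}_{I_2}-\ave{f_2}_K)+(\ave{f_1}_{I_1}-\ave{f_1}_K)\ave{f_2}_K,
\]
and, on the $I_3$-chain,
\[
\ave{f_1}_{I_3^{(i-1)}}\ave{\Delta_{I_3^{(i)}}f_2}_{I_3}+\ave{\Delta_{I_3^{(i)}}f_1}_{I_3}\ave{f_2}_{I_3^{(i)}}.
\]
Your mirrored decomposition works just as well for the compactness application.

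For the factor $F$, the paper simply sets $\tilde F_{k,i}(L):=F(L^{(k-i)})$, one function for each $i$. This already lies in $\mathcal{F}_0$ by the same argument you give. Your $\max_{0\le j\le k}F(P^{(j)})$, with the ratio absorbed into the coefficients, is also valid but not needed.
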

\begin{proof}
    We write 
    \begin{align*}
        &\abs{I_3}^{-1}(\ave{f_1,h_{I_1}^0}\ave{f_2,h_{I_2}^0}-\ave{f_1,h_{I_3}^0}\ave{f_2,h_{I_3}^0})=\ave{f_1}_{I_1}\ave{f_2}_{I_2}-\ave{f_1}_{I_3}\ave{f_2}_{I_3}\\
&\hspace*{1cm}=\ave{f_1}_{I_1}(\ave{f_2}_{I_2}-\ave{f_2}_{K})+(\ave{f_1}_{I_1}-\ave{f_1}_{K})\ave{f_2}_{K}+\ave{f_1}_{K}\ave{f_2}_{K}-\ave{f_1}_{I_3}\ave{f_2}_{I_3}.
    \end{align*}
    The first two terms can be further written as 
    \begin{equation*}
        \Sigma_1:=\sum_{i=1}^k\ave{f_1}_{I_1}\ave{\Delta_{I_2^{(i)}}f_2}_{I_2}\,\text{ and }\,   \Sigma_2:=\sum_{i=1}^k\ave{\Delta_{I_1^{(i)}}f_1}_{I_1}\ave{f_2}_{K}.
    \end{equation*}
    For the last term, we have 
    \begin{align*}
        &\ave{f_1}_{K}\ave{f_2}_{K}-\ave{f_1}_{I_3}\ave{f_2}_{I_3}=-\sum_{i=1}^{k}\Big(\ave{f_1}_{I_3^{(i-1)}}\ave{f_2}_{I_3^{(i-1)}}-\ave{f_1}_{I_3^{(i)}}\ave{f_2}_{I_3^{(i)}}\Big)\\
        &\hspace*{3cm}=-\sum_{i=1}^k\ave{f_1}_{I_3^{(i-1)}}\ave{\Delta_{I_3^{(i)}}f_2}_{I_3}-\sum_{i=1}^k\ave{\Delta_{I_3^{(i)}}f_1}_{I_3}\ave{f_2}_{I_3^{(i)}}=:\Sigma_3+\Sigma_4.
    \end{align*}
    The object invoking $\Sigma_1$ can be rewritten as
    \begin{align*}
        &\sum_{K\in\mathcal{D}(\sigma)}\sum_{\substack{I_1,I_2,I_3\in\mathcal{D}(\sigma)\\I_1^{(k)}=I_2^{(k)}=I_3^{(k)}=K}}F(K)a_{I_m,K}\ave{f_3,h_{I_3}}\abs{I_3}\sum_{i=1}^k\ave{f_1}_{I_1}\ave{\Delta_{I_2^{(i)}}f_2}_{I_2}\\
&\hspace*{1cm}=\sum_{i=1}^k\sum_{K\in\mathcal{D}(\sigma)}\sum_{\substack{I_1,L_2,I_3\in\mathcal{D}(\sigma)\\I_1^{(k)}=I_3^{(k)}=L_2^{(k-i)}=K}}F(K)\ave{f_3,h_{I_3}}\ave{f_1,h_{I_1}^0}\ave{f_2,h_{L_2}}\\
&\hspace*{8cm}\times\Big(\sum_{I_2^{(i)}=L_2}\abs{I_2}^{1/2}a_{I_m,K}\ave{h_{L_2}}_{I_2}\Big).
    \end{align*} 
 One can estimate that 
    \begin{equation*}
        \Babs{\gamma(I_3)\sum_{I_2^{(i)}=L_2}\abs{I_2}^{1/2}a_{I_j,K}\ave{h_{L_2}}_{I_2}}\lesssim\sum_{I_2^{(i)}=L_2}\frac{\abs{I_1}^{1/2}\abs{I_2}\abs{I_3}^{1/2}}{\abs{K}^2}\abs{L_2}^{-1/2}\lesssim\frac{\abs{I_1}^{1/2}\abs{L_2}^{1/2}\abs{I_3}^{1/2}}{\abs{K}^2},
    \end{equation*}
    hence it corresponds to $\sum_{i=1}^k S_{\mathcal{D(\sigma)}}^{k,k-i,k}$. The same argument shows that the term corresponding to $\Sigma_2$ can be written as $\sum_{i=1}^k S_{\mathcal{D(\sigma)}}^{k-i,0,k}$. 

    Finally, we handle the terms with $\Sigma_3$ and $\Sigma_4$. For $\Sigma_3$, we take $L:=I_3^{(i)}$ as the new parent, then 
    \begin{align*}
        &-\sum_{K\in\mathcal{D}(\sigma)}\sum_{\substack{I_1,I_2,I_3\in\mathcal{D}(\sigma)\\I_1^{(k)}=I_2^{(k)}=I_3^{(k)}=K}}F(K)a_{I_m,K}\abs{I_1}\ave{f_3,h_{I_3}}\sum_{i=1}^k\ave{f_1}_{I_3^{(i-1)}}\ave{\Delta_{I_3^{(i)}}f_2}_{I_3}\\
&\hspace*{1cm}=-\sum_{i=1}^k\sum_{L\in\mathcal{D}(\sigma)}\sum_{\substack{L_1,I_3\in\mathcal{D}(\sigma)\\L_1^{(1)}=I_3^{(i)}=L}}F(L^{(k-i)})\ave{f_3,h_{I_3}}\ave{f_1,h_{L_1}^0}\ave{f_2,h_L}\\
&\hspace*{5cm}\times\Big(\delta(I_3, L_1)\sum_{\substack{I_1,I_2\in\mathcal{D}(\sigma)\\I_1^{(k)}=I_2^{(k)}=I_3^{(k)}}}a_{I_m,I_3^{(k)}}\abs{I_1}\abs{L_1}^{-1/2}\ave{h_L}_{I_3}\Big),
    \end{align*}
    where $\delta(I_3, L_1)=1$ if $I_3\subset L_1$ and otherwise $\delta(I_3,L_1)=0$. The sum in bracket above can be bounded by 
    \begin{equation*}
        \sum_{\substack{I_1,I_2\in\mathcal{D}(\sigma)\\I_1^{(k)}=I_2^{(k)}=I_3^{(k)}}}\frac{\abs{I_1}\abs{I_2}\abs{I_3}^{1/2}}{\abs{K}^2\abs{L_1}^{1/2}\abs{L}^{1/2}}\lesssim\frac{\abs{L_1}^{1/2}\abs{L}^{1/2}\abs{I_3}^{1/2}}{\abs{L}^{2}}.
    \end{equation*}
    It remains to verify that $\tilde{F}_{k,i}(L):=F(L^{(k-i)})$ also belongs to $\mathcal{F}_0$. Since $F\in\mathcal{F}_0$, we have
    \[
    \lim_{\ell(L)\to0}\tilde{F}_{k,i}(L)=\lim_{\ell(L)\to0}F(L^{(k-i)})=0,
    \]
    and 
    \[
    \lim_{\ell(L)\to\infty}\tilde{F}_{k,i}(L)=\lim_{\ell(L)\to\infty}F(L^{(k-i)})=0.
    \]
    When $\ell(L)\sim 1$, note that 
    \[
    \abs{c_{L^{(k-i)}}}\geq\babs{\abs{c_L}-\ell(L^{(k-i)})}\gtrsim\abs{c_L} \text{ for sufficiently large }\abs{c_L}.
    \]
    Consequently,
    \begin{align*}
        \lim_{\substack{\abs{c_L}\to\infty}}\tilde{F}_{k,i}(L)&\leq\lim_{\substack{\ell(L)\to 0}}\tilde{F}_{k,i}(L)+\lim_{\substack{\ell(L)\to \infty}}\tilde{F}_{k,i}(L)+\lim_{\substack{\abs{c_L}\to\infty\\\ell(L)\sim 1}}\tilde{F}_{k,i}(L)\\
        &=\lim_{\substack{\abs{c_{L^{(k-i)}}}\to\infty\\\ell(L^{(k-i)})\sim 1}}F(L^{(k-i)})=0.
    \end{align*}
    Thus the expression invoking $\Sigma_3$ equals $-\sum_{i=1}^kS_{\mathcal{D}(\sigma)}^{1,0,i}$, and the one with $\Sigma_4$ can similarly be written as $-\sum_{i=1}^kS_{\mathcal{D}(\sigma)}^{0,0,i}$.
\end{proof}
\subsection{(3)$\Rightarrow$(2) in Theorem \ref{main thm}}

The WCP follows directly from the WCP$^*$ by H\"older's inequality. We therefore prove only that $T(1,1)\in\mathrm{CMO}$; the other cases are similar.

For each fixed $Q\in\mathcal{Q}$, write 
\begin{equation*}
    T(1,1)=T(1_{3Q},1_{3Q})+T(1_{(3Q)^c},1_{(3Q)^c})+T(1_{(3Q)^c},1_{3Q})+T(1_{3Q},1_{(3Q)^c}).
\end{equation*}
Then WCP$^*$ implies that 
\begin{equation*}
    \frac{1}{\abs{Q}}\int_Q \abs{T(1_{3Q},1_{3Q})(x)}\ud x\leq\left(\frac{1}{\abs{Q}}\int_{3Q}\abs{T(1_{3Q},1_{3Q})(x)}^2\ud x\right)^{1/2}\lesssim F^1(Q),
\end{equation*}
for some $F^1\in\mathcal{F}_0$. For the remaining terms, when $x\in Q$ we have
\begin{align*}
      &\abs{T(1_{(3Q)^c},1_{(3Q)^c})(x)+T(1_{(3Q)^c},1_{3Q})(x)+T(1_{3Q},1_{(3Q)^c})(x)\\
      &\hspace*{2cm}-T(1_{(3Q)^c},1_{(3Q)^c})(c_Q)-T(1_{(3Q)^c},1_{3Q})(c_Q)-T(1_{3Q},1_{(3Q)^c})(c_Q)}\\
      &\leq\int_{(3Q\times3Q)^c}\abs{K(x,y,z)-K(c_Q,y,z)}\ud z\ud y\\
      &\lesssim\int_{(3Q\times3Q)^c}\omega\left(\frac{\abs{x-c_Q}}{\abs{x-y}+\abs{x-z}}\right)\frac{F(x,y,z)}{(\abs{x-y}+\abs{x-z})^{2n}}\ud z\ud y.
\end{align*}
We split $(3Q\times3Q)^c$ as follows:
\begin{equation*}
  (3Q\times3Q)^c=\bigcup_{k\geq0}(2^{k+1}\cdot3Q)\times(2^{k+1}\cdot3Q)\backslash(2^{k}\cdot3Q)\times(2^{k}\cdot3Q)=:\bigcup_{k\geq0}Q_k.
\end{equation*}
On each $Q_k$, the arguments in the proof of Lemma \ref{coei} yield
\begin{align*}
    F(x,y,z)&\leq F_1(\abs{x-y}+\abs{x-z})F_2(\abs{x-y}+\abs{x-z})F_3\left(1+\frac{\abs{x+y}+\abs{x+z}}{1+\abs{x-y}+\abs{x-z}}\right)\\
    &\leq F_1(2^k\ell(Q))F_2(\ell(Q))F_3\left(\frac{2\abs{\abs{c_Q}-\ell(Q)}}{1+2^k\ell(Q)}\right),
\end{align*}
hence we obtain 
\begin{align*}
    &\sum_{k\geq0}\int_{Q_k}\omega\left(\frac{\abs{x-c_Q}}{\abs{x-y}+\abs{x-z}}\right)\frac{F(x,y,z)}{(\abs{x-y}+\abs{x-z})^{2n}}\ud z\ud y\\
&\hspace*{3cm}\lesssim\sum_{k\geq0}\omega(2^{-k})F_1(2^k\ell(Q))F_2(\ell(Q))F_3\left(\frac{2\abs{\abs{c_Q}-\ell(Q)}}{1+2^k\ell(Q)}\right)=:F^2(Q).
\end{align*}
Since $\sum_{k\geq0}\omega(2^{-k})<\infty$, we have
\begin{align*}
    \lim_{\ell(Q)\to 0}F^2(Q)=0\text{ and } \lim_{\ell(Q)\to \infty}F^2(Q)=0.
\end{align*}
When $\ell(Q)\sim 1$ and $\abs{c_Q}$ is sufficiently large, note that  
\[
\frac{2\abs{\abs{c_Q}-\ell(Q)}}{1+2^k\ell(Q)}\geq C2^{-k}\abs{c_Q},
\]
then
\begin{align*}
\lim_{\abs{c_Q}\to\infty}F^2(Q)&\leq\lim_{\ell(Q)\to 0}F^2(Q)+\lim_{\ell(Q)\to \infty}F^2(Q)+\lim_{\substack{\abs{c_Q}\to\infty\\\ell(Q)\sim1}}F^2(Q)\\
&\lesssim\sum_{k\geq0}\omega(2^{-k})F_3(C2^{-k}\abs{c_Q})=0.
\end{align*}
Hence $F^2\in\mathcal{F}_0$. Combining these estimates gives
\begin{align*}
     \inf_a\frac{1}{\abs{Q}}\int_Q\abs{T(1,1)(x)-a}\ud x\lesssim F^1(Q)+F^2(Q), \text{ where }F^1+F^2\in\mathcal{F}_0.
\end{align*}
Then $T(1,1)\in\mathrm{CMO}$ follows from Lemma \ref{CMO}.
\subsection{(1)$\Rightarrow$(3) in Theorem \ref{main thm}}
Fix $Q\in\mathcal{Q}$, the three-lattice theorem in \cite{LN2019} ensures that there exists $\tilde{Q}\in \mathcal{D}(\sigma)$ for some $\sigma\in(\{0,1\}^n)^{\Z}$ such that $Q\subset \tilde{Q}$ and $\ell({Q})\sim \ell(\tilde{Q})$. 

Let $N$ be the smallest non-negative integer such that $\tilde{Q}\in\mathcal{D}_{2N+1}(\sigma)$. When $N\geq2$, we estimate $ \norm{T(1_Q,1_Q)1_Q}{L^2}^2$ by
\begin{align*}
    &\int \overline{T(1_Q,1_Q)(x)}T(1_Q,1_Q)(x)1_{\tilde{Q}}(x)\ud x\\
    &\hspace*{2cm}=\int (P_N^\sigma)^\bot\overline{T(1_Q,1_Q)(x)}T(1_Q,1_Q)(x)1_{\tilde{Q}}(x)\ud x\\
    &\hspace*{6cm}+\int P_N^\sigma\overline{T(1_Q,1_Q)(x)}T(1_Q,1_Q)(x)1_{\tilde{Q}}(x)\ud x.
\end{align*}
The first term can be bounded by 
\begin{equation*}
    \norm{(P_N^\sigma)^\bot\overline{T}}{L^{4}\times L^4\to L^2}\abs{Q},
\end{equation*}
where $ \norm{(P_N^\sigma)^\bot\overline{T}}{L^{4}\times L^4\to L^2}\to 0$ as $N\to\infty$ by the compactness of $T$ and Lemma \ref{projection}. By duality, the second term equals 
\begin{align*}
    &\int \overline{T(1_Q,1_Q)(x)}P_N^\sigma\big(T(1_Q,1_Q)1_{\tilde{Q}}\big)(x)\ud x\\
    &=\int \overline{T(1_Q,1_Q)(x)}P_N^\sigma\big(T(1_Q,1_Q)1_{\tilde{Q}}-\ave{T(1_Q,1_Q)}_{\tilde{Q}}1_{\tilde{Q}}\big)(x)\ud x\\
    &\hspace*{2cm}+\int \overline{T(1_Q,1_Q)(x)}\ave{T(1_Q,1_Q)}_{\tilde{Q}}P_N^\sigma\big(1_{\tilde{Q}}\big)(x)\ud x=:\uppercase\expandafter{\romannumeral1}+\uppercase\expandafter{\romannumeral2}.
\end{align*}
For term $\uppercase\expandafter{\romannumeral1}$, note that 
\begin{align*}
    P_N^\sigma\big(T(1_Q,1_Q)1_{\tilde{Q}}-\ave{T(1_Q,1_Q)}_{\tilde{Q}}1_{\tilde{Q}}\big)(x)=\sum_{\substack{J\in\mathcal{D}_N(\sigma)\\J\subset \tilde{Q}}}\Delta_J(T(1_Q,1_Q))(x),
\end{align*}
and $\tilde{Q}\notin \mathcal{D}_{2N}(\sigma)$ means that $\tilde{Q}\cap[-2^{2N},2^{2N})^n=\emptyset$ or $\ell(\tilde{Q})<2^{-2N}$ or $\ell(\tilde{Q})>2^{2N}$.

Assume that there exists $J\in\mathcal{D}_N(\sigma)$ with $J\subset \tilde{Q}$, then it is not the case that $\ell(\tilde{Q})<2^{-2N}$. If $\tilde{Q}\cap[-2^{2N},2^{2N})^n=\emptyset$, then 
\begin{equation*}
   J\subset \tilde{Q}\subset \Big([-2^{2N},2^{2N})^n\Big)^c,
\end{equation*}
which contradicts $J\in\mathcal{D}_N(\sigma)$. Hence it must be $\ell(\tilde{Q})>2^{2N}$. 
Since $J\in\mathcal{D}_N(\sigma)$, there exists a collection $\{Q_k\}_{k=1}^{3^n}\subset \mathcal{D}(\sigma)$ with $\ell(Q_k)=2^{N}$ such that 
\begin{equation*}
    J\subset \bigcup_{k=1}^{3^n}Q_k \text{ for all }J\in\mathcal{D}_N(\sigma).
\end{equation*}
Therefore,
\begin{align*}
    \uppercase\expandafter{\romannumeral1}&\leq\left(\int\abs{T(1_Q,1_Q)(x)}^2\ud x\right)^{1/2}\cdot\Big(\int\Babs{\sum_{\substack{J\in\mathcal{D}_N(\sigma)\\J\subset \tilde{Q}}}\Delta_J(T(1_Q,1_Q))(x)}^2\ud x\Big)^{1/2}\\
    &\lesssim\abs{Q}^{1/2}\Big(\sum_{k=1}^{3^n}\sum_{J\subset Q_k}\abs{\ave{T(1_Q,1_Q),h_J}}^2\Big)^{1/2}\lesssim\abs{Q}^{1/2}\sum_{k=1}^{3^n}\abs{Q_k}^{1/2}\lesssim2^{-nN/2}\abs{Q},
\end{align*}
where we use the fact that $T$ is bounded from $L^\infty\times L^\infty$ to $\BMO$.
Term $\uppercase\expandafter{\romannumeral2}$ is similar. We write 
\begin{align*}
    &\uppercase\expandafter{\romannumeral2}\leq\left(\int\abs{T(1_Q,1_Q)(x)}^2\ud x\right)^{1/2}\Big(\int\abs{P_N^\sigma(1_{\tilde{Q}})(x)}^2\ud x\Big)^{1/2}\\
    &\hspace*{5cm}\lesssim\abs{Q}^{1/2}\Big(\int\abs{P_N^\sigma(1_{\tilde{Q}})(x)}^2\ud x\Big)^{1/2},
\end{align*}
where
\begin{equation}\label{wcp2}
    P_N^\sigma(1_{\tilde{Q}})=\sum_{\substack{J\in\mathcal{D}_N(\sigma)\\\tilde{Q}\subsetneq J}}\ave{1_{\tilde{Q}},h_J}h_J.
\end{equation}
Soppose that there exists $J\in\mathcal{D}_N(\sigma)$ satisfying $\tilde{Q}\subsetneq J$ for $\tilde{Q}\notin \mathcal{D}_{2N}(\sigma)$, if $\ell(\tilde{Q})>2^{2N}$, then
\[
\ell(J)>\ell(\tilde{Q})>2^{2N},
\]
which contradicts $J\in\mathcal{D}_N(\sigma)$. If $\tilde{Q}\cap[-2^{2N},2^{2N})^n=\emptyset$, then 
\[
\dist(J,[-2^N,2^N))\geq 2^{2N}-2\cdot2^{N}>0\,\text{ for }N\geq 2,
\]
which also contradicts $J\in\mathcal{D}_N(\sigma)$. So it must be $\ell(\tilde{Q})<2^{-2N}$.
Then the summation in (\ref{wcp2}) reduces to 
\begin{equation*}
   \sum_{k=-N}^N\ave{1_{\tilde{Q}},h_{J_k}}h_{J_k},
\end{equation*}
where $\{J_k\}_{k=-N}^{N}\subset \mathcal{D}(\sigma)$ is the unique sequence such that $\ell(J_k)=2^k$ and $\tilde{Q}\subsetneq J_{-N}\subsetneq \cdots\subsetneq J_{N}$. Then
\begin{equation*}
    \Big(\int\abs{P_N^\sigma(1_{\tilde{Q}})(x)}^2\ud x\Big)^{1/2}=\Big(\sum_{k=-N}^{N}\abs{\ave{1_{\tilde{Q}},h_{J_k}}}^2\Big)^{1/2}\lesssim\Big(\sum_{k=-N}^{N}\abs{Q}^2\abs{J_k}^{-1}\Big)^{1/2}\lesssim2^{nN/2}\abs{Q}.
\end{equation*}
Hence,
\[
\uppercase\expandafter{\romannumeral2}\lesssim2^{nN/2}\abs{Q}^{3/2}\lesssim2^{-nN/2}\abs{Q}.
\]
Combining these estimates, we have
\begin{equation}\label{wcp1}
    \norm{T(1_Q,1_Q)1_Q}{L^2}^2\lesssim(\norm{(P_N^\sigma)^\bot\overline{T}}{L^{4}\times L^4\to L^2}+2^{-nN/2})\abs{Q},\,\text{ for }N\geq 2.
\end{equation}
Let
\begin{equation*}
    F(Q):= \left\{
  \begin{aligned}
    & 1 ,&\text{ whenever }\, N<2;\\
    & \norm{(P_N^\sigma)^\bot\overline{T}}{L^{4}\times L^4\to L^2}+2^{-nN/2}, &\text{ whenever }\, N\geq2,
  \end{aligned}
  \right.
  \end{equation*}
  then the boundedness of $T$ and (\ref{wcp1}) imply that
  \[
  \norm{T(1_Q,1_Q)1_Q}{L^2}^2\lesssim F(Q)\abs{Q}.
  \]
  Note that as $\ell(Q)\to0$ or $\ell(Q)\to\infty$ or $\abs{c_Q}\to\infty$, we have $N\to\infty$, and then $F(Q)\to 0$.
  Thus $F\in\mathcal{F}_0$ and the WCP$^*$ holds.
\begin{rem}
   Our main result is stated for bilinear forms. However, thanks to the well-developed multilinear representation theorem (see, e.g., \cite{AMV2022}), similar arguments carry over to the multilinear case. We omit the details and leave them to the interested reader.
\end{rem}
\subsection*{Acknowledgements}
The author thanks his doctoral advisor Kangwei Li for suggesting this project and for careful reading the manuscript. The author also thanks Professor Cao Mingming for providing guidance during the preparation of the manuscript. 
\bibliographystyle{plain}
\bibliography{ref}

@misc{cao2024,
      title={A characterization of compactness via bilinear $T1$ theorem}, 
      author={Mingming Cao and Honghai Liu and Zengyan Si and K\'{o}z\'{o} Yabuta},
      year={2024},
      eprint={2404.14013},
      archivePrefix={arXiv},
      primaryClass={math.CA},
      url={https://arxiv.org/abs/2404.14013}, 
}

@article {AMV2022,
    AUTHOR = {Airta, Emil and Martikainen, Henri and Vuorinen, Emil},
     TITLE = {Product space singular integrals with mild kernel regularity},
   JOURNAL = {J. Geom. Anal.},
  FJOURNAL = {Journal of Geometric Analysis},
    VOLUME = {32},
      YEAR = {2022},
    NUMBER = {1},
     PAGES = {Paper No. 24, 49},
      ISSN = {1050-6926,1559-002X},
   MRCLASS = {42B20},
  MRNUMBER = {4349928},
MRREVIEWER = {Fabio\ Nicola},
       DOI = {10.1007/s12220-021-00757-3},
       URL = {https://doi.org/10.1007/s12220-021-00757-3},
}

@article {GT2002,
    AUTHOR = {Grafakos, Loukas and Torres, Rodolfo H.},
     TITLE = {Multilinear {C}alder\'on-{Z}ygmund theory},
   JOURNAL = {Adv. Math.},
  FJOURNAL = {Advances in Mathematics},
    VOLUME = {165},
      YEAR = {2002},
    NUMBER = {1},
     PAGES = {124--164},
      ISSN = {0001-8708,1090-2082},
   MRCLASS = {42B25 (35S05 47G10)},
  MRNUMBER = {1880324},
MRREVIEWER = {Gerald\ B.\ Folland},
       DOI = {10.1006/aima.2001.2028},
       URL = {https://doi.org/10.1006/aima.2001.2028},
}

@article {KS1999,
    AUTHOR = {Kenig, Carlos E. and Stein, Elias M.},
     TITLE = {Multilinear estimates and fractional integration},
   JOURNAL = {Math. Res. Lett.},
  FJOURNAL = {Mathematical Research Letters},
    VOLUME = {6},
      YEAR = {1999},
    NUMBER = {1},
     PAGES = {1--15},
      ISSN = {1073-2780},
   MRCLASS = {42B20 (26A33 47G10)},
  MRNUMBER = {1682725},
MRREVIEWER = {Michael\ T.\ Lacey},
       DOI = {10.4310/MRL.1999.v6.n1.a1},
       URL = {https://doi.org/10.4310/MRL.1999.v6.n1.a1},
}

@article {LMV2021,
    AUTHOR = {Li, Kangwei and Martikainen, Henri and Vuorinen, Emil},
     TITLE = {Genuinely multilinear weighted estimates for singular
              integrals in product spaces},
   JOURNAL = {Adv. Math.},
  FJOURNAL = {Advances in Mathematics},
    VOLUME = {393},
      YEAR = {2021},
     PAGES = {Paper No. 108099, 49},
      ISSN = {0001-8708,1090-2082},
   MRCLASS = {42B20},
  MRNUMBER = {4340231},
MRREVIEWER = {Wen\ Yuan},
       DOI = {10.1016/j.aim.2021.108099},
       URL = {https://doi.org/10.1016/j.aim.2021.108099},
}

@article {LMO2019,
    AUTHOR = {Li, Kangwei and Martikainen, Henri and Ou, Yumeng and
              Vuorinen, Emil},
     TITLE = {Bilinear representation theorem},
   JOURNAL = {Trans. Amer. Math. Soc.},
  FJOURNAL = {Transactions of the American Mathematical Society},
    VOLUME = {371},
      YEAR = {2019},
    NUMBER = {6},
     PAGES = {4193--4214},
      ISSN = {0002-9947,1088-6850},
   MRCLASS = {42B20},
  MRNUMBER = {3917220},
MRREVIEWER = {Sibei\ Yang},
       DOI = {10.1090/tran/7505},
       URL = {https://doi.org/10.1090/tran/7505},
}

@article {DYY1998,
    AUTHOR = {Deng, Donggao and Yan, Lixin and Yang, Qixiang},
     TITLE = {Blocking analysis and {$T(1)$} theorem},
   JOURNAL = {Sci. China Ser. A},
  FJOURNAL = {Science in China. Series A. Mathematics},
    VOLUME = {41},
      YEAR = {1998},
    NUMBER = {8},
     PAGES = {801--808},
      ISSN = {1006-9283,1862-2763},
   MRCLASS = {42C40 (42B20 47B38)},
  MRNUMBER = {1667059},
MRREVIEWER = {Vladimir\ B.\ Vasilyev},
       DOI = {10.1007/BF02871663},
       URL = {https://doi.org/10.1007/BF02871663},
}

@incollection {F1990,
    AUTHOR = {Figiel, Tadeusz},
     TITLE = {Singular integral operators: a martingale approach},
 BOOKTITLE = {Geometry of {B}anach spaces ({S}trobl, 1989)},
    SERIES = {London Math. Soc. Lecture Note Ser.},
    VOLUME = {158},
     PAGES = {95--110},
 PUBLISHER = {Cambridge Univ. Press, Cambridge},
      YEAR = {1990},
      ISBN = {0-521-40850-4},
   MRCLASS = {42B20 (46B09 46E40 47G10)},
  MRNUMBER = {1110189},
}

@article {GH2018,
    AUTHOR = {Grau de la Herr\'an, Ana and Hyt\"onen, Tuomas},
     TITLE = {Dyadic representation and boundedness of nonhomogeneous
              {C}alder\'on-{Z}ygmund operators with mild kernel regularity},
   JOURNAL = {Michigan Math. J.},
  FJOURNAL = {Michigan Mathematical Journal},
    VOLUME = {67},
      YEAR = {2018},
    NUMBER = {4},
     PAGES = {757--786},
      ISSN = {0026-2285,1945-2365},
   MRCLASS = {42B20 (60G46)},
  MRNUMBER = {3877436},
MRREVIEWER = {Joaqu\'in\ Motos},
       DOI = {10.1307/mmj/1531447374},
       URL = {https://doi.org/10.1307/mmj/1531447374},
}

@book {Stein1993,
    AUTHOR = {Stein, Elias M.},
     TITLE = {Harmonic analysis: real-variable methods, orthogonality, and
              oscillatory integrals},
    SERIES = {Princeton Mathematical Series},
    VOLUME = {43},
      NOTE = {With the assistance of Timothy S. Murphy,
              Monographs in Harmonic Analysis, III},
 PUBLISHER = {Princeton University Press, Princeton, NJ},
      YEAR = {1993},
     PAGES = {xiv+695},
      ISBN = {0-691-03216-5},
   MRCLASS = {42-02 (35Sxx 43-02 47G30)},
  MRNUMBER = {1232192},
MRREVIEWER = {Michael\ Cowling},
}

@article {LOPTTR2009,
    AUTHOR = {Lerner, Andrei K. and Ombrosi, Sheldy and P\'erez, Carlos and
              Torres, Rodolfo H. and Trujillo-Gonz\'alez, Rodrigo},
     TITLE = {New maximal functions and multiple weights for the multilinear
              {C}alder\'on-{Z}ygmund theory},
   JOURNAL = {Adv. Math.},
  FJOURNAL = {Advances in Mathematics},
    VOLUME = {220},
      YEAR = {2009},
    NUMBER = {4},
     PAGES = {1222--1264},
      ISSN = {0001-8708,1090-2082},
   MRCLASS = {42B20},
  MRNUMBER = {2483720},
MRREVIEWER = {Philip\ T.\ Gressman},
       DOI = {10.1016/j.aim.2008.10.014},
       URL = {https://doi.org/10.1016/j.aim.2008.10.014},
}

@article {CZ1952,
    AUTHOR = {Calder\'on, Alberto. P. and Zygmund, Antoni.},
     TITLE = {On the existence of certain singular integrals},
   JOURNAL = {Acta Math.},
  FJOURNAL = {Acta Mathematica},
    VOLUME = {88},
      YEAR = {1952},
     PAGES = {85--139},
      ISSN = {0001-5962,1871-2509},
   MRCLASS = {42.4X},
  MRNUMBER = {52553},
MRREVIEWER = {H.\ Kober},
       DOI = {10.1007/BF02392130},
       URL = {https://afcix1b13095ec5284139sc0bkkp6nxfbq6wnwfayc.eds.tju.edu.cn/10.1007/BF02392130},
}

@article {DJ1984,
    AUTHOR = {David, Guy and Journ\'e, Jean-Lin},
     TITLE = {A boundedness criterion for generalized {C}alder\'on-{Z}ygmund
              operators},
   JOURNAL = {Ann. of Math. (2)},
  FJOURNAL = {Annals of Mathematics. Second Series},
    VOLUME = {120},
      YEAR = {1984},
    NUMBER = {2},
     PAGES = {371--397},
      ISSN = {0003-486X,1939-8980},
   MRCLASS = {42B20 (47B38)},
  MRNUMBER = {763911},
MRREVIEWER = {Yves\ Meyer},
       DOI = {10.2307/2006946},
       URL = {https://afcix1b13095ec5284139sc0bkkp6nxfbq6wnwfayc.eds.tju.edu.cn/10.2307/2006946},
}

@article {H2012,
    AUTHOR = {Hyt\"onen, Tuomas P.},
     TITLE = {The sharp weighted bound for general {C}alder\'on-{Z}ygmund
              operators},
   JOURNAL = {Ann. of Math. (2)},
  FJOURNAL = {Annals of Mathematics. Second Series},
    VOLUME = {175},
      YEAR = {2012},
    NUMBER = {3},
     PAGES = {1473--1506},
      ISSN = {0003-486X,1939-8980},
   MRCLASS = {42B20 (42B25)},
  MRNUMBER = {2912709},
MRREVIEWER = {\'Arp\'ad\ B\'enyi},
       DOI = {10.4007/annals.2012.175.3.9},
       URL = {https://afcix1b13095ec5284139sc0bkkp6nxfbq6wnwfayc.eds.tju.edu.cn/10.4007/annals.2012.175.3.9},
}

@book {Wilson2008,
    AUTHOR = {Wilson, Michael},
     TITLE = {Weighted {L}ittlewood-{P}aley theory and exponential-square
              integrability},
    SERIES = {Lecture Notes in Mathematics},
    VOLUME = {1924},
 PUBLISHER = {Springer, Berlin},
      YEAR = {2008},
     PAGES = {xiv+224},
      ISBN = {978-3-540-74582-2},
   MRCLASS = {42B25 (42B15 42B20 47B38 47G10)},
  MRNUMBER = {2359017},
MRREVIEWER = {Caroline\ P.\ Sweezy},
}

@article {Uchi1978,
    AUTHOR = {Uchiyama, Akihito},
     TITLE = {On the compactness of operators of {H}ankel type},
   JOURNAL = {Tohoku Math. J. (2)},
  FJOURNAL = {The Tohoku Mathematical Journal. Second Series},
    VOLUME = {30},
      YEAR = {1978},
    NUMBER = {1},
     PAGES = {163--171},
      ISSN = {0040-8735,2186-585X},
   MRCLASS = {47B37 (44A25)},
  MRNUMBER = {467384},
MRREVIEWER = {Richard\ Rochberg},
       DOI = {10.2748/tmj/1178230105},
       URL = {https://doi.org/10.2748/tmj/1178230105},
}

@article {LN2019,
    AUTHOR = {Lerner, Andrei K. and Nazarov, Fedor},
     TITLE = {Intuitive dyadic calculus: the basics},
   JOURNAL = {Expo. Math.},
  FJOURNAL = {Expositiones Mathematicae},
    VOLUME = {37},
      YEAR = {2019},
    NUMBER = {3},
     PAGES = {225--265},
      ISSN = {0723-0869,1878-0792},
   MRCLASS = {42B20 (42B25)},
  MRNUMBER = {4007575},
MRREVIEWER = {Luz\ Roncal},
       DOI = {10.1016/j.exmath.2018.01.001},
       URL = {https://doi.org/10.1016/j.exmath.2018.01.001},
}

@article {V2015,
    AUTHOR = {Villarroya, Paco},
     TITLE = {A characterization of compactness for singular integrals},
   JOURNAL = {J. Math. Pures Appl. (9)},
  FJOURNAL = {Journal de Math\'ematiques Pures et Appliqu\'ees. Neuvi\`eme
              S\'erie},
    VOLUME = {104},
      YEAR = {2015},
    NUMBER = {3},
     PAGES = {485--532},
      ISSN = {0021-7824,1776-3371},
   MRCLASS = {42B25 (42C40 47G10)},
  MRNUMBER = {3383175},
MRREVIEWER = {Oscar\ Blasco},
       DOI = {10.1016/j.matpur.2015.03.006},
       URL = {https://doi.org/10.1016/j.matpur.2015.03.006},
}

@article {OV2017,
    AUTHOR = {Olsen, Jan-Fredrik and Villarroya, Paco},
     TITLE = {Endpoint estimates for compact {C}alder\'on-{Z}ygmund
              operators},
   JOURNAL = {Rev. Mat. Iberoam.},
  FJOURNAL = {Revista Matem\'atica Iberoamericana},
    VOLUME = {33},
      YEAR = {2017},
    NUMBER = {4},
     PAGES = {1285--1308},
      ISSN = {0213-2230,2235-0616},
   MRCLASS = {42B20 (42B25)},
  MRNUMBER = {3729600},
MRREVIEWER = {Israel\ P.\ Rivera-R\'ios},
       DOI = {10.4171/RMI/972},
       URL = {https://doi.org/10.4171/RMI/972},
}

@article {HL2023,
    AUTHOR = {Hyt\"onen, Tuomas and Lappas, Stefanos},
     TITLE = {Extrapolation of compactness on weighted spaces},
   JOURNAL = {Rev. Mat. Iberoam.},
  FJOURNAL = {Revista Matem\'atica Iberoamericana},
    VOLUME = {39},
      YEAR = {2023},
    NUMBER = {1},
     PAGES = {91--122},
      ISSN = {0213-2230,2235-0616},
   MRCLASS = {47B38 (35S05 42B20 42B35 46B70)},
  MRNUMBER = {4571600},
MRREVIEWER = {Junyan\ Zhao},
       DOI = {10.4171/rmi/1325},
       URL = {https://doi.org/10.4171/rmi/1325},
}

@article {HL2022,
    AUTHOR = {Hyt\"onen, Tuomas and Lappas, Stefanos},
     TITLE = {Extrapolation of compactness on weighted spaces: bilinear
              operators},
   JOURNAL = {Indag. Math. (N.S.)},
  FJOURNAL = {Koninklijke Nederlandse Akademie van Wetenschappen.
              Indagationes Mathematicae. New Series},
    VOLUME = {33},
      YEAR = {2022},
    NUMBER = {2},
     PAGES = {397--420},
      ISSN = {0019-3577,1872-6100},
   MRCLASS = {47B38 (42B20 42B35 47H60)},
  MRNUMBER = {4383118},
MRREVIEWER = {Ting\ Chen},
       DOI = {10.1016/j.indag.2021.09.007},
       URL = {https://doi.org/10.1016/j.indag.2021.09.007},
}

@article {COY2022,
    AUTHOR = {Cao, Mingming and Olivo, Andrea and Yabuta, K\^oz\^o},
     TITLE = {Extrapolation for multilinear compact operators and
              applications},
   JOURNAL = {Trans. Amer. Math. Soc.},
  FJOURNAL = {Transactions of the American Mathematical Society},
    VOLUME = {375},
      YEAR = {2022},
    NUMBER = {7},
     PAGES = {5011--5070},
      ISSN = {0002-9947,1088-6850},
   MRCLASS = {42B25 (42B20 42B35)},
  MRNUMBER = {4439498},
MRREVIEWER = {Jianwei-Urbain\ Yang},
       DOI = {10.1090/tran/8645},
       URL = {https://doi.org/10.1090/tran/8645},
}

@article {BT2013,
    AUTHOR = {B\'enyi, \'Arp\'ad and Torres, Rodolfo H.},
     TITLE = {Compact bilinear operators and commutators},
   JOURNAL = {Proc. Amer. Math. Soc.},
  FJOURNAL = {Proceedings of the American Mathematical Society},
    VOLUME = {141},
      YEAR = {2013},
    NUMBER = {10},
     PAGES = {3609--3621},
      ISSN = {0002-9939,1088-6826},
   MRCLASS = {42B20 (47B07)},
  MRNUMBER = {3080183},
MRREVIEWER = {Markus\ Passenbrunner},
       DOI = {10.1090/S0002-9939-2013-11689-8},
       URL = {https://doi.org/10.1090/S0002-9939-2013-11689-8},
}

@article {BDMT2015,
    AUTHOR = {B\'enyi, \'Arp\'ad and Dami\'an, Wendol\'in and Moen, Kabe and
              Torres, Rodolfo H.},
     TITLE = {Compact bilinear commutators: the weighted case},
   JOURNAL = {Michigan Math. J.},
  FJOURNAL = {Michigan Mathematical Journal},
    VOLUME = {64},
      YEAR = {2015},
    NUMBER = {1},
     PAGES = {39--51},
      ISSN = {0026-2285,1945-2365},
   MRCLASS = {47B20 (42B25 47B07)},
  MRNUMBER = {3326579},
MRREVIEWER = {Vivien\ G.\ Miller},
       DOI = {10.1307/mmj/1427203284},
       URL = {https://doi.org/10.1307/mmj/1427203284},
}

@article {HLTY2023,
    AUTHOR = {Hyt\"onen, Tuomas and Li, Kangwei and Tao, Jin and Yang,
              Dachun},
     TITLE = {The {$L^p$}-to-{$L^q$} compactness of commutators with
              {$p>q$}},
   JOURNAL = {Studia Math.},
  FJOURNAL = {Studia Mathematica},
    VOLUME = {271},
      YEAR = {2023},
    NUMBER = {1},
     PAGES = {85--105},
      ISSN = {0039-3223,1730-6337},
   MRCLASS = {42B20 (42B25 42B35 46E30 47B47)},
  MRNUMBER = {4596725},
MRREVIEWER = {Ronghui\ Liu},
       DOI = {10.4064/sm220910-10-1},
       URL = {https://doi.org/10.4064/sm220910-10-1},
}

@misc{MT2024,
      title={On the compactness of the bi-commutator}, 
      author={Henri Martikainen and Tuomas Oikari},
      year={2024},
      eprint={2405.05460},
      archivePrefix={arXiv},
      primaryClass={math.CA},
      url={https://arxiv.org/abs/2405.05460}, 
}

@article {LO2020,
    AUTHOR = {Lerner, Andrei K. and Ombrosi, Sheldy},
     TITLE = {Some remarks on the pointwise sparse domination},
   JOURNAL = {J. Geom. Anal.},
  FJOURNAL = {Journal of Geometric Analysis},
    VOLUME = {30},
      YEAR = {2020},
    NUMBER = {1},
     PAGES = {1011--1027},
      ISSN = {1050-6926,1559-002X},
   MRCLASS = {42B20 (42B25)},
  MRNUMBER = {4058547},
MRREVIEWER = {Pavel\ Zorin-Kranich},
       DOI = {10.1007/s12220-019-00172-9},
       URL = {https://doi.org/10.1007/s12220-019-00172-9},
}
\end{document}